\documentclass[11pt]{article}

\usepackage{amssymb}
\usepackage{mathrsfs}
\addtolength{\topmargin}{-.5in} \addtolength{\textheight}{1in}
\addtolength{\oddsidemargin}{-.5in}
\addtolength{\evensidemargin}{-.5in}
\addtolength{\textwidth}{1in}
\usepackage{latexsym,amsmath,amssymb,amsfonts,epsfig,graphicx,cite,psfrag}
\usepackage{eepic,color,colordvi,amscd}
\usepackage{ebezier}
\usepackage{verbatim}
\usepackage{subfigure}
\usepackage{accents}
\usepackage{amsthm}
\usepackage{comment}
\usepackage{tikz}
\usepackage[colorlinks=true,
linkcolor=blue,citecolor=blue,
urlcolor=blue]{hyperref}

\theoremstyle{plain}
\newtheorem{theo}{Theorem}[section]

\newtheorem{lem}[theo]{Lemma}

\newtheorem{conj}[theo]{Conjecture}

\theoremstyle{definition}

\theoremstyle{remark}

\setcounter{section}{0}

\title{Number of Hamiltonian cycles in planar triangulations}
\author{Xiaonan Liu and Xingxing Yu\footnote{Partially supported by NSF Grant DMS 1954134}\medskip
\\ School of Mathematics\\Georgia Institute of Technology\\Atlanta, GA 30332}
\date{}

\begin{document}
\maketitle
\begin{abstract}

Whitney proved in 1931 that 4-connected planar triangulations are Hamiltonian.  Hakimi, Schmeichel, and Thomassen conjectured in 1979 that if $G$ is a 4-connected planar triangulation with $n$ vertices then $G$ contains at least $2(n-2)(n-4)$ Hamiltonian cycles, with equality if and only if $G$ is a double wheel. 
On the other hand, 
a recent result of  Alahmadi, Aldred, and Thomassen states that there are exponentially many Hamiltonian cycles in 
5-connected planar triangulations. In this paper, we consider 4-connected planar $n$-vertex triangulations $G$ that do not have too many separating 4-cycles or have minimum degree 5. We show that if $G$ has  $O(n/{\log}_2 n)$ separating 4-cycles then $G$ has $\Omega(n^2)$ Hamiltonian cycles, and  if $\delta(G)\ge 5$  then $G$ has $2^{\Omega(n^{1/4})}$ Hamiltonian cycles. Both results improve previous work. Moreover, the proofs involve a
``double wheel'' structure, providing further evidence to the above conjecture. 
\end{abstract}

\bigskip

AMS Subject Classification:05C10, 05C30,  05C38, 05C40, 05C45

\bigskip 

Keywords: Planar triangulation, Hamiltonian cycle, Bridge, Tutte path

\newpage

\section{Introduction}
A cycle $C$ in a connected graph $G$ is said to be {\it separating} 
if the graph obtained from $G$ by deleting $C$  is not connected. For any positive integer $k$, a {\it $k$-cycle} is a cycle of length $k$. 
A separating 3-cycle is also known 
as a {\it separating triangle}.  A {\it planar triangulation} is a plane graph in which every face is bounded by a triangle (i.e., 3-cycle).

In 1931, Whitney \cite{Wh31}  showed that every planar triangulation without separating triangles is Hamiltonian. In 1956, Tutte \cite{Tu56} proved that every 4-connected planar graph is Hamiltonian.  
Thomassen \cite{Th83} showed in 1983 that every 4-connected planar graph is {\it Hamiltonian connected}, i.e.,  for any distinct vertices $x$ and $y$ there exists a Hamiltonian path between $x$ and $y$. Thus, every 4-connected planar graph has more than one Hamiltonian cycle. 

Hakimi, Schmeichel, and Thomassen \cite{HST79} proved in 1979 that every 4-connected planar triangulation has at least $n/\log_2 n$ Hamiltonian cycles. Recently, Brinkmann, Souffriau and Van Cleemput \cite{BSV18} improved the lower bound to $\frac{12}{5}(n-2)$. 
Consider a {\it double wheel}, a planar triangulation obtained from a cycle by adding two vertices and all edges from these two vertices to the cycle. 
Observe that a double wheel with $n$ vertices has  precisely $2(n-2)(n-4)$ Hamiltonian cycles. Hakimi, Schmeichel, and Thomassen \cite{HST79} conjectured that, among 4-connected planar triangulations, the  double wheels have the smallest number of Hamiltonian cycles. 

\begin{conj}[Hakimi, Schmeichel, and Thomassen, 1979]\label{conjecture}
If $G$ is a 4-connected planar triangulation with $n$ vertices, then $G$ has at least $2(n-2)(n-4)$ Hamiltonian cycles, with equality if and only if $G$ is a double wheel.
\end{conj}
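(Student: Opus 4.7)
The plan is to prove Conjecture~\ref{conjecture} by induction on $n$, splitting the inductive step according to whether $G$ contains a separating $4$-cycle; the base case is verified directly for small $n$.

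Suppose first that $G$ contains a separating $4$-cycle $C = v_1v_2v_3v_4$. Then $C$ decomposes $G$ into subgraphs $G_1, G_2$ with $V(G_1) \cap V(G_2) = V(C)$ and $|V(G_1)| + |V(G_2)| = n + 4$. I would complete each $G_i$ to a $4$-connected planar triangulation $G_i^+$ by inserting a single interior vertex adjacent to $v_1, v_2, v_3, v_4$, apply the inductive hypothesis to each $G_i^+$, and re-assemble Hamiltonian cycles of $G$ from compatible Hamiltonian pieces of $G_1$ and $G_2$ matching along $V(C)$. The numerical work then reduces to verifying
\[
2(n_1-2)(n_1-4) + 2(n_2-2)(n_2-4) + (\text{cross terms}) \;\ge\; 2(n-2)(n-4),
\]
with equality only when both halves are themselves double wheels, glued in the unique way that reassembles a double wheel.

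Suppose instead that $G$ has no separating $4$-cycle. Here I would push the Tutte-path and bridge machinery highlighted in the paper to its quantitative limit. Thomassen's Hamiltonian-connectedness theorem provides, for each edge $xy$, at least one Hamiltonian cycle through $xy$; strengthening this, one seeks to produce, via careful choice of Tutte bridges around the outer face, at least $c \cdot n$ Hamiltonian cycles through each of a constant fraction of the edges, for a positive constant $c$. Summing over edges and dividing by the $n$-fold multiplicity with which each Hamiltonian cycle is counted, this should yield at least $2(n-2)(n-4)$ Hamiltonian cycles in this regime.

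The main obstacle is sharpness. Existing Tutte-bridge arguments comfortably give $\Omega(n^2)$ cycles, matching the paper's first main theorem under the weaker hypothesis that separating $4$-cycles are few, but the exact constant $2$ has resisted all approaches. Equally delicate is the equality characterization: a stability argument must show that any $G$ attaining $2(n-2)(n-4)$ yet differing from a double wheel admits a local edge-exchange at some vertex of degree outside $\{4, n-2\}$ yielding an additional Hamiltonian cycle. Turning this local-to-global intuition into rigor -- showing that near-extremal configurations are forced into the double-wheel pattern -- is where I expect the full proof of the conjecture would stand or fall.
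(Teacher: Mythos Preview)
The statement you are attempting to prove is Conjecture~\ref{conjecture}, which the paper explicitly records as \emph{open}: immediately after stating it, the authors write that ``This conjecture remains open and appears to be difficult.'' There is therefore no proof in the paper to compare against; the paper's contributions (Theorems~\ref{main1} and~\ref{main2}) are partial results in the direction of the conjecture, not a resolution of it.

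Your proposal is not a proof but a strategy sketch, and you yourself flag the essential gaps. A few concrete points:
\begin{itemize}
\item In the separating $4$-cycle case, the reassembly step is where the difficulty truly lies, not the arithmetic inequality you display. A Hamiltonian cycle of $G_i^+$ passes through the auxiliary vertex $z_i$ via two neighbours on $C$, yielding a Hamiltonian path in $G_i$ between two vertices of $C$; combining such paths from $G_1$ and $G_2$ into a Hamiltonian cycle of $G$ requires the endpoint pairs to be compatible, and the loss in this matching is exactly what prevents a clean product bound. Your ``cross terms'' hide this, and no known argument recovers the precise constant $2$ here. Note also that the extremal example (the double wheel) has many separating $4$-cycles, so the equality analysis must live entirely inside this case.
\item In the case with no separating $4$-cycle the graph is $5$-connected, and by the Alahmadi--Aldred--Thomassen theorem cited in the paper it already has $2^{\Omega(n)}$ Hamiltonian cycles. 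So this branch is not where sharpness is at stake; your discussion of squeezing out ``the exact constant $2$'' in this regime is misplaced.
\end{itemize}
In short, the conjecture is open, the paper does not claim to prove it, and your outline does not close the known gaps.
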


This conjecture remains open and appears to be difficult. There are natural related questions one can ask: What can we say about the number of Hamiltonian cycles in  5-connected planar triangulations? What about 4-connected planar triangulations without many separating 4-cycles  or with minimum degree at least 5?

Recently, Lo \cite{Lo20} showed that every 4-connected $n$-vertex planar triangulation with $O(\log n)$ separating 4-cycles has $\Omega((n/\log n)^2)$ Hamiltonian cycles. In this paper,  we improve Lo's result by weakening its hypothesis and strengthening its conclusion to a quadratic bound.

\begin{theo}\label{main1}
Let $G$ be a 4-connected planar triangulation with $n$ vertices and $O( n/\log_2 n)$ separating 4-cycles.  Then $G$ has $\Omega (n^2)$ Hamiltonian cycles. 
\end{theo}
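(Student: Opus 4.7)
The plan is to combine a peeling reduction that eliminates the few separating 4-cycles with a careful counting inside the resulting essentially 5-connected triangulation, where a large family of independent local reroutings of a Tutte path yields the desired quadratic lower bound on Hamiltonian cycles. This strategy builds on and strengthens Lo's approach~\cite{Lo20}, exploiting the weaker hypothesis on separating 4-cycles to upgrade a logarithmic number of reroutes to a linear number, and correspondingly to upgrade the final count from $\Omega((n/\log n)^2)$ to $\Omega(n^2)$.

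First I would reduce to the 5-connected case. For each separating 4-cycle $C$ in $G$, a standard surgery replaces the smaller side of $C$ with an added diagonal, producing a smaller 4-connected planar triangulation $G'$ such that every Hamiltonian cycle of $G'$ extends to at least one Hamiltonian cycle of $G$. Iterating over the $O(n/\log_2 n)$ separating 4-cycles produces a 5-connected planar triangulation $G^*$ on $n' = \Omega(n)$ vertices, and it suffices to show that $G^*$ has $\Omega(n'^2)$ Hamiltonian cycles. Inside $G^*$, I would use Thomassen's Hamiltonian connectivity theorem and the Tutte path machinery (as in~\cite{BSV18}) to construct a Hamiltonian path $P$ between two chosen vertices $x,y$ with $xy$ a facial edge, and then analyze its ``bridges'', namely the components of $G^*-V(P)$ together with their attachments to $P$.

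Next, a counting argument would identify $\Omega(n)$ bridges of $P$, each admitting two locally inequivalent traversals, with the property that reroutes at distinct bridges commute and alter pairwise disjoint sets of edges. Since they commute and affect disjoint edges, varying any two such reroutes independently yields $\Omega(n^2)$ distinct Hamiltonian paths from $x$ to $y$, each closing via the edge $xy$ to a distinct Hamiltonian cycle of $G^*$. The double-wheel intuition enters precisely here: each binary reroute corresponds, at a local scale, to choosing which of two ``hubs'' in a local wheel-like configuration a given ``rim-arc'' attaches to, matching the two-parameter structure of Hamiltonian cycles in an honest double wheel and hence the $2(n-2)(n-4)$ figure from Conjecture~\ref{conjecture}.

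The main obstacle, and the step where the paper must push beyond Lo's, is verifying that $\Omega(n)$ bridges admit a genuinely independent binary choice. Each separating 4-cycle of $G^*$ is a potential obstruction, since a bridge crossing such a 4-cycle can have its traversal rigidly determined by the 4-cycle; hence the $O(n/\log_2 n)$ budget of separating 4-cycles must absorb all such obstructions. I expect the technical heart of the proof to be a charging argument showing that each separating 4-cycle blocks reroutings in at most $O(\log_2 n)$ bridges, so that at least $\Omega(n)$ unobstructed bridges remain under the stated hypothesis, which is tight enough for the $\Omega(n^2)$ conclusion but not for any weaker bound on the number of separating 4-cycles.
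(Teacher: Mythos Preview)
Your reduction to the $5$-connected case does not work. Replacing the smaller side of a separating $4$-cycle $C=uvwxu$ by a diagonal need not preserve $4$-connectivity: a vertex of $C$ with only one neighbour on the surviving side drops to degree $3$ in $G'$. Even if you patch this, there is no reason $n'=\Omega(n)$: with $\Theta(n/\log_2 n)$ separating $4$-cycles having pairwise disjoint interiors of size $\Theta(\log_2 n)$, your surgeries delete all but $O(n/\log_2 n)$ vertices. More fundamentally, the bridge step is ill-posed. In a $4$-connected (let alone $5$-connected) planar graph every $C$-Tutte path is already Hamiltonian, so $G^*-V(P)=\emptyset$ and there are no bridges in the sense you define. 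Whatever ``local reroutes'' you intend must be produced by some other mechanism, and the central assertions --- that $\Omega(n)$ of them exist, pairwise commute, and alter disjoint edge sets, and that each separating $4$-cycle obstructs only $O(\log_2 n)$ of them --- carry the entire argument but are left as unsupported heuristics.

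The paper does not reduce to $5$-connectivity at all. It argues by induction on $n$ via a dichotomy (Lemma~\ref{special-set}): either $G$ has an independent set $S$ with $|S|\ge c_1 n/\log_2 n$ satisfying the hypotheses of Lemma~\ref{edgesetF}, which then yields $(3/2)^{|S|}\gg n^2$ Hamiltonian cycles directly; or $G$ contains nonadjacent $v,x$ with $|N(v)\cap N(x)|>16\log_2 n$. In the second case one takes $C=uvwxu$ with $N(v)\cap N(x)\subseteq V(\overline{C})$ and examines the block chain $B_1,\dots,B_t$ of $\overline{C}-\{v,x\}$. If at least $2\log_2 n$ blocks are nontrivial, each contributes two Hamiltonian subpaths via Lemma~\ref{tuttepath}, giving $2^{2\log_2 n}=n^2$ Hamiltonian cycles. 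Otherwise a long run of trivial blocks forces several consecutive degree-$4$ vertices; contracting an edge there, applying the induction hypothesis, and supplementing with an auxiliary linear lower bound (Lemma~\ref{2edge}: $\Omega(n)$ Hamiltonian cycles through any two prescribed edges of a triangle) finishes the count. The ``double-wheel'' intuition you mention is genuinely present, but it enters as this structural dichotomy, not through bridge reroutes.
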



Alahmadi, Aldred, and Thomassen \cite{AAT20}  proved that every 5-connected $n$-vertex planar triangulation has $2^{\Omega (n)}$ Hamiltonian cycles, improving the earlier bound $2^{\Omega(n^{1/4})}$ of B\"{o}hme, Harant, and Tk{\' a}{\v c} \cite{BHT99}. We prove the following result which improves the result of B\"{o}hme, Harant, and Tk{\' a}{\v c} by replacing the 5-connectedness condition with``minimum degree 5".

\begin{theo}\label{main2}
Let $G$ be a 4-connected planar triangulation with $n$ vertices and  minimum degree $5$. Then $G$ has $2^{\Omega( n^{1/4})}$ Hamiltonian cycles.
\end{theo}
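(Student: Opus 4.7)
The plan is to combine Böhme, Harant, and Tk\'a\v c's switching argument \cite{BHT99} (which produced $2^{\Omega(n^{1/4})}$ Hamiltonian cycles in 5-connected planar triangulations) with a structural decomposition of $G$ along its separating 4-cycles, using $\delta(G)\ge 5$ to ensure each piece of the decomposition behaves essentially like a 5-connected triangulation on which BHT applies.

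First, I would analyze the family $\mathcal{F}$ of separating 4-cycles of $G$. By planarity, any two members of $\mathcal{F}$ either bound disjoint closed disks or are strictly nested, so $\mathcal{F}$ is laminar and organizes as a rooted forest under nesting. Adjoining a virtual root for the outer face, define the \emph{region} of a node $C \in \mathcal{F}$ to be the set of vertices in $C$'s closed interior but not in the closed interior of any child of $C$; these regions partition $V(G)$ up to the separating 4-cycles themselves. Within each region $R$, the near-triangulation induced by $R$ together with its bounding 4-cycle(s) has no internal separating 4-cycle, and combined with $\delta(G)\ge 5$ this forces it to be essentially 5-connected in the sense that every 4-cut coincides with a boundary 4-cycle.

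Second, for each region $R$ I would establish a \emph{boundary-prescribed} version of BHT's switching lemma: given any valid pattern of entries and exits on the boundary 4-cycle(s), it produces $2^{\Omega(|R|^{1/4})}$ distinct Hamiltonian routings of $R$ respecting the pattern. Because distinct regions are vertex-disjoint away from their boundary cycles, and a Hamiltonian cycle of $G$ restricts on each region to a system of paths with prescribed endpoints on the boundary, these choices combine independently across the forest. If the regions have sizes $m_1,\dots,m_t$ with $\sum_i m_i = \Omega(n)$, the total count of Hamiltonian cycles is at least
\[
\prod_{i} 2^{\Omega(m_i^{1/4})} \;=\; 2^{\Omega(\sum_i m_i^{1/4})} \;\ge\; 2^{\Omega(n^{1/4})},
\]
where the last inequality uses the superadditivity $a^{1/4}+b^{1/4}\ge (a+b)^{1/4}$ on $[0,\infty)$.

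The main technical obstacle is proving the boundary-prescribed switching lemma, since BHT's original argument operates globally on a 5-connected sphere triangulation, whereas here each piece is bounded by 4-cycles that constrain routing. I would attempt this by constructing Hamiltonian routings region-by-region using Tutte paths with prescribed endpoints on each separating 4-cycle, invoking Thomassen's Hamilton-connectivity theorem to patch pieces together, and adapting the bridge analysis from the proof of Theorem \ref{main1}; the $\delta\ge 5$ hypothesis then supplies, locally inside each region, exactly the rigidity that BHT's switches exploit globally. A secondary obstacle is handling regions too small for BHT to yield a nontrivial factor; these can be absorbed either by aggregating adjacent small regions using $\delta\ge 5$, or by observing that many small regions force a long nested chain in the forest, from which independent binary switches can be extracted directly.
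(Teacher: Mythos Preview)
Your decomposition breaks at the first structural step: the family $\mathcal{F}$ of all separating 4-cycles in a 4-connected planar triangulation is \emph{not} laminar. Two separating 4-cycles can cross. Concretely, take non-adjacent vertices $v,x$ with common neighbors $u_0,u_1,u_2,u_3$ occurring in this cyclic order in the strip between $v$ and $x$; then $vu_0xu_2v$ and $vu_1xu_3v$ are 4-cycles sharing $\{v,x\}$, yet each has one of the other's vertices strictly inside and one strictly outside. (Double wheels, and more generally any pair $v,x$ with $|N(v)\cap N(x)|\ge 4$, furnish such crossings in abundance.) So there is no rooted forest under nesting, no well-defined ``regions'', and the product formula $\prod_i 2^{\Omega(m_i^{1/4})}$ has nothing to range over.

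The paper does use a nested-versus-disjoint dichotomy, but only for a carefully selected subfamily: starting from a large independent set $S$ that saturates no 4-cycle, 5-cycle, or diamond-6-cycle (Lemma~\ref{special-set-1}), it associates to each $v\in S$ adjacent to three vertices of a separating 4-cycle a maximal diamond-4-cycle $D_v$, and then proves (claim (3) in the proof) that these particular diamond-4-cycles satisfy $|V(D_u)\cap V(D_{u'})|\le 2$ with tight control on the intersection pattern --- precisely because $S$ avoids short cycles. Laminarity is earned, not free. After that, the argument is not a region-by-region BHT count but a tree of Hamiltonian cycles built by iteratively uncontracting along a long nested chain, with Lemma~\ref{diamond-4-cycle} supplying the branching; the ``many disjoint'' case is handled separately via Lemmas~\ref{uw-path} and \ref{uv-path}. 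Your second obstacle, the boundary-prescribed BHT switching lemma, is also left as a hope rather than a proof; the paper sidesteps it entirely by never invoking BHT inside a region.
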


In Section 2, we discuss a key idea in \cite{AAT20} used to show the existence of exponentially many Hamiltonian cycles in a 5-connected planar triangulation. We also  collect several known results on ``Tutte paths'' in planar graphs, and use them to see when a certain planar graph has at least two Hamiltonian paths between two given vertices.   

In Section 3, we prove Theorem~\ref{main1}. Basically, we show that if a  4-connected planar triangulation $G$ does not have too many separating 4-cycles, then either $G$ has a large independent set with nice properties, or $G$ has two vertices with many common neighbors (i.e., $G$ has a large structure which resembles a double wheel). In either case, we can find the desired number of Hamiltonian cycles in $G$.

In Section 4, we prove Theorem~\ref{main2}. We will see that if a 4-connected planar triangulation $G$ has minimum degree  $5$, then either $G$ has a large independent set with nice properties, or $G$ has two vertices with a lot of common neighbors, or $G$ has many separating 4-cycles.  For the first two possibilities, we use similar arguments as in the proof of Theorem~\ref{main1}. For the third possibility, we show that there are many separating 4-cycles in $G$ which either have pairwise disjoint interiors or are all pairwise  ``nested". In both cases, we can find many Hamiltonian cycles in $G$.  

\medskip 

We conclude this section with some terminology and notation. Let $G$ and $H$ be graphs. We use $G\cup H$ and $G\cap H$ to denote the union and intersection of $G$ and $H$, respectively. For any $S\subseteq V(G)$, we use $G[S]$ to denote the subgraph of $G$ induced by $S$, and let $G-S$ denote the graph obtained from $G$ by deleting $S$ and all edges of $G$ incident with $S$. A set $S\subseteq V(G)$ is a {\it cut} in $G$ if $G-S$ has more components than $G$, and if $|S|=k$ then $S$ is a cut of {\it size $k$} or {\it $k$-cut} for short.  For a subgraph $T$ of $G$, we often write $G-T$ for $G-V(T)$ and write $G[T]$ for $G[V(T)]$. 
A path (respectively, cycle) is often represented as a sequence (respectively, cyclic sequence) of vertices with consecutive vertices 
being adjacent.  Given a path $P$ and distinct vertices $x,y\in V(P)$, we use $xPy$ to denote the subpath of $P$ between $x$ and $y$.

Let $G$ be a graph. For each $v\in V(G)$, we use $N_G(v)$ to denote the neighborhood of $v$ in $G$, and if there is no confusion we omit the reference to $G$. 
If $H$ is a subgraph of $G$, we write  $H \subseteq G$. For any set $R$ consisting of 1-element or 2-element subsets of $V(G)$, we use $H+R$ to denote the graph with vertex set $V(H)\cup (R\cap V(G))$ and edge set $E(H)\cup (R\setminus V(G))$. If $R=\{\{x,y\}\}$ (respectively, $R=\{v\}$), we write $H+xy$ (respectively, $H+v$)  instead of $H+R$.

Let $G$ be a plane graph. The {\it outer walk} of $G$ consists of vertices and edges of $G$ incident with the infinite face of $G$. If the outer walk is a cycle in $G$, we call it  {\it outer cycle} instead.  If all vertices of $G$ are incident with its infinite face, then we say that $G$ is an {\it outer planar} graph.  For a cycle $C$ in  $G$, we use $\overline{C}$ to denote the subgraph of $G$ consisting of all vertices and edges of $G$ contained in the closed disc bounded by $C$. The {\it interior} of $C$ is then defined as the subgraph $\overline{C}-C$. For any distinct vertices $u,v\in V(C)$, we use $uCv$ to denote the subpath of $C$ from $u$ to $v$ in clockwise order.

\section{Preliminaries}

Alahmadi, Aldred, and Thomassen \cite{AAT20} recently proved that if $G$ is a 5-connected $n$-vertex planar triangulation then $G$ has  an independent set $S$ of $\Omega(n)$ vertices, such that $G-F$  is 4-connected for each set $F$ consisting of $|S|$ edges of $G$ that are incident with $S$. There are  $2^{\Omega (n)}$ choices of $F$. Hence, applying the above mentioned theorem of Tutte to each $G-F$, it follows from a simple calculation that  $G$ has $2^{\Omega (n)}$ Hamiltonian cycles. 

How could a cut of size at most 3 occur after removing from a 4-connected planar triangulation such a set $F$ of edges incident with an independent set $S$? Alahmadi {\it et al}  observed that this could happen if a vertex in $S$ is contained in a separating 4-cycle, or a vertex in $S$ is  adjacent to three vertices of a separating 4-cycle, or two vertices in $S$ are contained in a separating 5-cycle, or three vertices in $S$ occur in some  9-vertex graph called diamond-6-cycle. A {\it diamond-6-cycle} is a graph isomorphic to the graph shown on the left in Figure~\ref{diamond cycle}, in which the vertices of degree 3 are called {\it crucial} vertices.  We also define {\it diamond-4-cycle} here for later use; it is a graph isomorphic to the graph shown on the right in Figure~\ref{diamond cycle}, where the two degree 3 vertices not adjacent to the degree 2 vertex are its {\it crucial} vertices. 
\begin{figure}[htbp] 
\centering
\includegraphics[width=10cm,height=4.5cm]{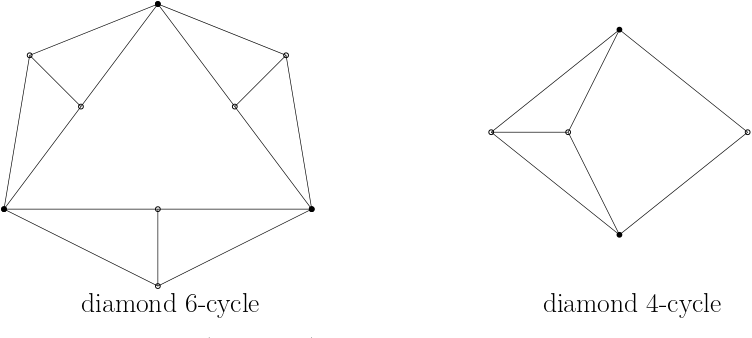}
  \caption{diamond-6-cycle and diamond-4-cycle} 
\label{diamond cycle}
\end{figure}

Formally,  let $S$ be an independent set in a graph $G$. We say that $S$ {\it saturates} a 4-cycle or 5-cycle $C$ in $G$ if $|S\cap V(C)|=2$, and  $S$ {\it saturates} a diamond-6-cycle $D$ in $G$ if $S$ contains three crucial vertices of $D$.

The following result for 5-connected planar triangulations was proved by Alahmadi {\it et al} \cite{AAT20}. Lo \cite{Lo20} observed that it is essentially true for certain  4-connected planar triangulations. We further observe that a slight variation holds for 4-connected planar triangulations with minimum degree 5. We provide a proof here as it is short and gives the key idea used in \cite{AAT20} for proving the existence of exponentially many Hamiltonian cycles in 5-connected planar triangulations. 

\begin{lem}\label{edgesetF}
Let $S$ be an independent set in a 4-connected planar triangulation $G$ with $|V(G)|\ge 6$, and assume that the following conditions hold:
\begin{itemize}
\item[\textup{(i)}] all vertices in $S$ have degree at most 6,
\item[\textup{(ii)}] $S$ saturates no 4-cycle, or 5-cycle, or diamond-6-cycle,
\item[\textup{(iii)}] $\delta(G)\ge 5$, or no vertex in $S$ is contained in a separating 4-cycle, and
\item[\textup{(iv)}] no vertex in $S$ is adjacent to 3 vertices of any separating 4-cycle.
\end{itemize}
Let $F$ be any subset of $E(G)$ with $|F|=|S|$ edges such that for each vertex $v\in S$, there is exactly one edge in $F$ incident with $v$. Then $G-F$ is 4-connected and has a
Hamiltonian cycle. Moreover, if ${\cal C}$ is a collection of Hamiltonian cycles in $G$ formed by taking precisely one Hamiltonian cycle from $G-F$ for each choice of $F$, then 
$|{\cal C}|\ge (3/2)^{|S|}$.
\end{lem}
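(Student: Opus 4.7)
The plan is to decompose the lemma into two parts: (a) $G-F$ is 4-connected and hence has a Hamiltonian cycle by Tutte's theorem, and (b) the resulting collection $\mathcal{C}$ contains at least $(3/2)^{|S|}$ members as $F$ varies.

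For (a), I would argue by contradiction. Assume $G-F$ admits a cut $T$ with $|T|\le 3$, and let $(A,B)$ be the resulting separation of $V(G)\setminus T$. Since $G$ is 4-connected, every edge of $G$ between $A$ and $B$ must lie in $F$; each such cross edge has exactly one endpoint in $S$, and distinct cross edges have distinct $S$-endpoints (because each $s\in S$ is incident to exactly one edge of $F$). I would then case-split on $|T|$ and on the number $r$ of cross $F$-edges. For example, when $r=1$ with cross edge $sv$ and $s\in S\cap A$, the set $T\cup\{s\}$ is a 4-cut in $G$; the rigid local structure of 4-cuts in 4-connected planar triangulations forces an associated separating 4-cycle on which $s$ is adjacent to at least three vertices, contradicting (iv); in the first alternative of (iii), $s$ itself would lie on that separating 4-cycle (forbidden), and in the second alternative $\deg_G(s)\le 4$ contradicts $\delta(G)\ge 5$. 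When $r\ge 2$, combining $T$ with the endpoints of the cross edges and invoking planarity should produce a 4-cycle, 5-cycle, or diamond-6-cycle saturated by $S$, contradicting (ii). The bound $\deg_G(s)\le 6$ from (i) limits the local planar configurations around each $s\in S$ and keeps this enumeration finite.

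For (b), Tutte's theorem applied to the planar 4-connected graph $G-F$ yields a Hamiltonian cycle; pick one such cycle for each choice of $F$ to form $\mathcal{C}$. The total number of admissible $F$'s equals $\prod_{s\in S}\deg_G(s)$. On the other hand, a fixed Hamiltonian cycle $H\subseteq G$ can be the selected cycle for some $F$ only if $F\cap E(H)=\emptyset$, and since exactly two edges at each $s\in S$ lie on $H$, the number of $F$'s that could produce $H$ is at most $\prod_{s\in S}(\deg_G(s)-2)$. Dividing,
\[
|\mathcal{C}|\ \ge\ \prod_{s\in S}\frac{\deg_G(s)}{\deg_G(s)-2}\ \ge\ \left(\frac{3}{2}\right)^{|S|},
\]
using that $\deg_G(s)\in\{4,5,6\}$ (by 4-connectedness and (i)) and that $t/(t-2)\ge 3/2$ on this range, with the minimum $3/2$ attained at $t=6$.

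The main obstacle is the planar case analysis in (a): one must show that every potential small cut in $G-F$ corresponds, via the cross $F$-edges, to exactly one of the four prohibited saturated configurations from (ii)--(iv). This requires carefully exploiting the rigid local structure of small vertex cuts in 4-connected planar triangulations (typically small separating cycles), the planar embedding, and the way the $S$-endpoints of the cross $F$-edges cluster into the specific patterns---saturated 4-cycle, saturated 5-cycle, or saturated diamond-6-cycle---singled out in the hypotheses.
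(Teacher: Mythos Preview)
Your plan is the same as the paper's, and part (b) is exactly the paper's counting argument. Part (a) is also structurally right, but two points in your sketch are garbled and one key observation is missing.

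First, in the $r=1$ case you conflate the two candidate 4-cycles. With cross edge $sv$, $s\in S$, both $T\cup\{s\}$ and $T\cup\{v\}$ are potential separating 4-cycles. Condition (iv) rules out $T\cup\{v\}$ being separating (since then $s$ would be adjacent to three of its vertices); hence $T\cup\{s\}$ is the separating one, which puts $s$ \emph{on} the cycle and forces $\deg_G(v)=4$ (not $\deg_G(s)$). Now (iii) is violated: under the alternative $\delta(G)\ge 5$ we contradict $\deg_G(v)=4$, and under the alternative ``no vertex of $S$ lies on a separating 4-cycle'' we contradict $s$ lying on one. You have these two alternatives swapped and the degree bound attached to the wrong endpoint.

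Second, the clean way to organize the case analysis---and what the paper does---is to use that $G$ is a triangulation: for each cross edge $e$, the two endpoints of $e$ have exactly two common neighbors, and both must lie in $T$. This immediately forces $|T|\ge 2$; moreover, two cross edges cannot be ``framed'' by the same pair in $T$ (else their $S$-endpoints lie on a common 4-cycle, violating (ii)), so $|F'|\le\binom{|T|}{2}$. This pins down $|T|=3$ and $|F'|\in\{1,2,3\}$, and the three values correspond exactly to the (iii)/(iv) case, a saturated 5-cycle, and a saturated diamond-6-cycle. Without this observation your ``planarity should produce'' step for $r\ge 2$ is doing all the work with no mechanism supplied.
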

\begin{proof}

First, suppose  $G-F$ is 4-connected for all possible choices of $F$. Then $S$ contains no vertex of degree 4 in $G$; so by (i), all vertices in $S$ have degree 5 or 6 in $G$. Let $a_1$ and $a_2$ denote the number of vertices in $S$ of degree 5 and 6, respectively. There are $5^{a_1}6^{a_2}$ choices of the edge set $F\subseteq E(G)$, with precisely one edge from each vertex in $S$. For each choice of $F$, $G-F$ has a Hamiltonian cycle by a result of Tutte. Let ${\cal C}$ be a collection of Hamiltonian cycles in $G$ obtained by taking precisely one Hamiltonian cycle from each $G-F$. Then each Hamiltonian cycle of $G$ in ${\cal C}$ is chosen at most $(5-2)^{a_1}(6-2)^{a_2}=3^{a_1}4^{a_2}$ times. Thus $|{\cal C}|\ge (5/3)^{a_1}(6/4)^{a_2}\geq (3/2)^{a_1+a_2}=(3/2)^{|S|}$.

\medskip

Now suppose there exists an $F$ such that $G-F$ is not 4-connected. Let $K$ be a minimal cut of $G-F$; so $|K|\leq 3$. Let $G_1,G_2$ be subgraphs of $G-F$ 
such that $G-F=G_1\cup G_2$, $V(G_1\cap G_2)=K$, $E(G_1\cap G_2)=\emptyset$, and $V(G_i)\ne K$ for $i=1,2$. 
Let $F'$ be the set of the edges between $G_1-K$ and $G_2-K$ in $G$. Then $F'\subseteq F$.  Since $G$ is 4-connected,  $G-K$ is connected; so $F'\ne \emptyset$. 

Since $G$ is a 4-connected planar triangulation, for each $e\in F'$, the two vertices incident with $e$ have exactly two common neighbors, which must be contained in $K$. Hence, $|K|\geq 2$. 

Also observe that, for any two edges $e_1,e_2\in F'$,  there do not exist distinct vertices $u,v\in K$ such that all vertices incident with $e_1$ or $e_2$ are contained in $N_G(u)$ and $N_G(v)$. For, otherwise, the vertices $u$ and $v$ form a 4-cycle with  the two vertices in $S$  that are incident with $e_1$ or $e_2$,  contradicting (ii). 

By the above observations, $|F'|\le  \binom{|K|}{2}$. Moreover, $|K|=3$ as otherwise $|K|=2$ and $|F'|\leq \binom{2}{2}=1$, contradicting the assumption that $G$ is 4-connected. Hence, $1\le |F'|\le 3$. 

Suppose $|F'|=1$ and let $uv\in F'$ with $u\in S$. Then $G[K\cup \{v\}]$ or $G[K\cup \{u\}]$ is a separating 4-cycle in $G$ (as $|V(G)|\geq 6$). Now,  $G[K\cup \{v\}]$ is not a separating 4-cycle in $G$; otherwise, $G[K\cup \{v\}]$ has three neighbors of $u$, contradicting (iv). Then $G[K\cup \{u\}]$ is a separating 4-cycle in $G$. Thus, $d_G(v)=4$ and the vertex $u\in S$ is contained in a separating 4-cycle in $G$, which contradicts (iii).

If $|F'|=2$, then let $u,v\in S$ be incident with the edges in $F'$. We see that $G[K\cup \{u,v\}]$ contains a 5-cycle, contradicting (ii). 
So $|F'|=3$, and let $u,v,w\in S$ be incident with the edges in $F'$. Since $S$ saturates no 4-cycle by (ii), $F'$ is a matching in $G$. But then we see that $G[K\cup \{u,v,w\}]$ contains a diamond-6-cycle in which $u, v, w$ are crucial vertices, contradicting (ii). 
\end{proof}

We need the following result from Lo \cite{Lo20}.

\begin{lem}[Lo, 2020] \label{5-cycle}
Let $G$ be a 4-connected planar triangulation and let $S$ be an independent set of vertices of degree at most 6 in $G$,  such that $S$ saturates no 4-cycle in $G$. Then there exists a subset $S'\subseteq S$ of size at least $|S|/541$ such that $S'$ saturates no 5-cycle in $G$.
\end{lem}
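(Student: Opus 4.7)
The plan is to reduce the statement to finding a large independent set in an auxiliary conflict graph. Define $H$ to be the graph on vertex set $S$ in which $u$ and $w$ are adjacent if and only if $\{u,w\}$ saturates some 5-cycle of $G$; equivalently, $u$ and $w$ are non-adjacent vertices of $G$ that both lie on a common 5-cycle. Any independent set $S'$ in $H$ is then a subset of $S$ (hence automatically independent in $G$) that saturates no 5-cycle of $G$. By a greedy/Tur\'an-type argument, it suffices to establish the degree bound $\Delta(H) \le 540$, which produces an independent set in $H$ of size at least $|S|/(\Delta(H)+1) \ge |S|/541$.

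A first structural consequence of the hypothesis that $S$ saturates no 4-cycle is that any two $u, w \in S$ share at most one common neighbor in $G$: otherwise two distinct common neighbors $x, y$ would give a 4-cycle $u x w y$ saturated by $\{u,w\} \subseteq S$. In particular, on any 5-cycle $u\,a\,b\,c\,d\,u$ containing $u \in S$ and $w \in \{b,c\} \cap S$, the vertex of the cycle lying between $u$ and $w$ is forced to be their unique common neighbor in $G$, and the other neighbor of $u$ on the cycle is non-adjacent to $w$ (else it would be a second common neighbor).

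Given this, I would bound $\deg_H(u)$ for fixed $u \in S$ by parametrizing each $w \in N_H(u)$ via a witnessing ordered pair $(a,d)$ of distinct neighbors of $u$: $a$ is the (unique) common neighbor of $u$ and $w$, and $d$ is the other neighbor of $u$ on the witnessing 5-cycle. Since $d(u) \le 6$, there are at most $6 \cdot 5 = 30$ such ordered pairs. The remaining task is to show that for each fixed $(a,d)$, the number of admissible $w \in N(a) \cap S \setminus (N(u) \cup \{u\})$ — those having a neighbor $c \in N(d) \setminus \{a,u\}$ closing the 5-cycle — is at most $18$. The main inputs are: (i) the degree bound $d(w) \le 6$; (ii) the uniqueness of common neighbors applied to pairs $w, w' \in N(a) \cap S$, which forces them to lie at cyclic distance $\ge 3$ on the cyclic neighborhood of $a$; and (iii) the planar embedding constraints on $N(a)$ and $N(d)$ around $u$. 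Multiplying then yields $\deg_H(u) \le 30 \cdot 18 = 540$, as required.

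The main obstacle is the per-pair bound in step three. At first sight $w$ ranges over $N(a) \cap S$, a set which is not obviously bounded since $d(a)$ can be arbitrarily large. Controlling this requires combining the 4-cycle-free hypothesis (which restricts how $S$-vertices can sit in the cyclic neighborhood $N(a)$) with the planarity-based requirement that $w$ admit a length-3 path back to $u$ via a specific second neighbor $d$ of $u$. Careful casework on whether $w$ plays the role of $b$ or $c$ on the 5-cycle, together with bookkeeping to avoid double counting across different witness 5-cycles for the same $w$, is the delicate part of the argument and is where the exact constant $541$ ultimately crystallizes.
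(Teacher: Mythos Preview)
The paper does not give its own proof of this lemma; it is quoted from Lo~\cite{Lo20} and used as a black box, so there is no in-paper argument to compare against. Your high-level strategy---build the conflict graph $H$ on $S$ whose edges record pairs saturating a $5$-cycle, bound $\Delta(H)\le 540$, and then extract an independent set of size $|S|/(\Delta(H)+1)\ge |S|/541$ greedily---is the natural approach and is almost certainly the shape of Lo's argument.

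Where your proposal falls short is that it is only an outline: you reduce everything to the claim that for each ordered pair $(a,d)\in N(u)^2$ there are at most $18$ admissible $w$, and then explicitly label this ``the main obstacle'' without resolving it. That per-pair bound \emph{is} the entire content of the lemma; the rest is routine. The constant $18$ appears to be reverse-engineered from $540/30$ rather than produced by an argument. Your own diagnosis of the difficulty is accurate: $d(a)$ is unbounded, and the cyclic-distance-$\ge 3$ observation in the link of $a$ only yields $|S\cap N(a)|\le d(a)/3$, which is useless on its own. A cleaner reduction is to note that for a fixed pair $(a,c)$ at most one $w\in S$ can lie in $N(a)\cap N(c)$ (two such $w,w'$ would give a saturated $4$-cycle $w\,a\,w'\,c$), so the question becomes bounding the number of relevant $c\in N(d)$---but $d$ also has unbounded degree, so one must use the planarity of the full configuration $(u,a,w,c,d)$ in a substantive way, not merely invoke it. Until that step is actually carried out, what you have is a correct plan with the proof missing.
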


Lo \cite{Lo20}  also observed that  the following lemma stated for 5-connected planar triangulations in \cite{AAT20} actually holds for 4-connected planar triangulations. 

\begin{lem}[Alahmadi, Aldred, and Thomassen, 2020; Lo, 2020] \label{diamond}
Let G be a 4-connected planar triangulation and let $S$ be an independent set of vertices of degree at most 6 in $G$,  such that $S$ saturates no 4-cycle in $G$. Then there exists a subset $S'\subseteq S$ of size at least $|S|/301$ such that $S'$ saturates no diamond-6-cycle in $G$.
\end{lem}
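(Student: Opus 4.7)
The plan is to reduce the problem to finding a large independent set in an auxiliary $3$-uniform hypergraph. I would define $H$ to have vertex set $S$, with hyperedges the triples $\{a,b,c\}\subseteq S$ that form the crucial vertex set of some diamond-6-cycle in $G$. A subset $S'\subseteq S$ saturates no diamond-6-cycle in $G$ if and only if $S'$ is independent in $H$, so it suffices to find an independent set in $H$ of size at least $|S|/301$.

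The key step, and the main obstacle of the proof, is to bound $\Delta(H)\le 600$. Fix $v\in S$ and suppose $v$ is a crucial vertex of a diamond-6-cycle $D$ in $G$. Since $v$ has degree $3$ in $D$ and $d_G(v)\le 6$, the three neighbors of $v$ in $D$ form a $3$-subset of $N_G(v)$, of which there are at most $\binom{6}{3}=20$. Once this triple of neighbors of $v$ is fixed, the remaining six vertices of $D$ must be placed so as to produce a diamond-6-cycle in $G$: planarity fixes the cyclic order of $N_G(v)$ around $v$, the triangulation structure constrains which edges may appear, and the hypothesis that $S$ saturates no $4$-cycle rules out placements in which another crucial vertex of $D$ would share two common neighbors with $v$ in an illegal way. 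A careful but finite case analysis (handling the possible rotations of the three chosen neighbors around $v$ and the two possible positions of the companion degree-$3$ vertex at each crucial vertex) should give at most $30$ completions per choice of triple, hence at most $20\cdot 30=600$ diamond-6-cycles through $v$ with $v$ crucial. The delicate part will be organizing this case analysis cleanly; planarity must be used repeatedly to rule out geometrically impossible placements, and the ``no saturated $4$-cycle'' hypothesis must be invoked to prune further.

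Granting $\Delta(H)\le 600$, I would extract $S'$ by a standard greedy procedure. Fix an arbitrary ordering of $S$ and scan in order, adding a vertex $v$ to the running set $S'$ unless doing so would place all three vertices of some hyperedge of $H$ in $S'$. For each rejected vertex $v$, select a witnessing hyperedge $\{v,u,w\}\in H$ with $u,w\in S'$. Each hyperedge witnesses the rejection of at most one vertex (the last of its three vertices to be processed, and only if the other two have been admitted), so for each $u\in S'$ the number of rejections witnessed by a hyperedge through $u$ is at most $\Delta(H)\le 600$. Since each rejected vertex has exactly two witnesses in $S'$, double counting yields $2|S\setminus S'|\le 600\,|S'|$, whence $|S'|\ge 2|S|/(600+2)=|S|/301$, as required.
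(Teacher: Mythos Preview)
The paper does not prove this lemma; it is quoted from Alahmadi--Aldred--Thomassen with Lo's observation that $4$-connectedness suffices, so there is no in-paper proof to compare against. Your overall strategy---build a $3$-uniform hypergraph $H$ on $S$ whose hyperedges record saturated diamond-$6$-cycles, bound $\Delta(H)$, and extract a large independent set greedily---is the approach taken in those references, and your greedy double-count is correct: from $\Delta(H)\le 600$ one gets $2|S\setminus S'|\le 600\,|S'|$ and hence $|S'|\ge |S|/301$.

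Two points need attention. First, a diamond-$6$-cycle has nine vertices, \emph{six} of which have degree $3$; all six are crucial, not three. They come in three adjacent pairs (each crucial vertex is adjacent to its partner and to two of the three degree-$4$ vertices), and since $S$ is independent it meets the crucial vertices of any $D$ in at most one vertex per pair, hence at most three in total. Your hyperedges should therefore be ``triples $\{a,b,c\}\subseteq S$ consisting of three crucial vertices of some diamond-$6$-cycle'', not ``the crucial vertex set''. This does not break the argument, but the description needs correcting.

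Second, and this is the real gap, the bound $\Delta(H)\le 600$ is the entire content of the lemma and you have not proved it. Your sketch (at most $\binom{6}{3}=20$ choices for the three $D$-neighbours of $v$, then at most $30$ completions) has the right shape, but the number $30$ is reverse-engineered from the target constant rather than derived. A genuine argument fixes $v$ and its three neighbours in $D$ (which sit on the link cycle of $v$ in the triangulation), identifies which of the three is the partner crucial vertex, and then uses planarity and the triangulation structure around the two degree-$4$ vertices of $D$ adjacent to $v$ to bound the number of ways to place the remaining five vertices of $D$. Until that enumeration is actually carried out or cited, the proof is incomplete.
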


Lo \cite{Lo20} proved a lemma implying that any 4-connected planar triangulation has a large independent set or contains two vertices with a lot of common neighbors.

\begin{lem}[Lo, 2020] \label {ISET}
Let $G$ be a 4-connected planar triangulation. Let $S$ be an independent set of vertices of degree at most 6, and $S'$ be a maximal subset of $S$ such that $S'$ saturates no 4-cycle in $G$. Then there exist two distinct vertices $v,x\in V(G)$ such that $|N(v)\cap N(x)| \geq |S|/(9|S'|)$.
\end{lem}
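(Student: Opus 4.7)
My plan is to exploit the maximality of $S'$ to produce, for each $u \in S \setminus S'$, a 4-cycle ``witness'', and then to apply pigeonhole to the local structure around the elements of $S'$. Maximality forces $S' \cup \{u\}$ to saturate some 4-cycle $C_u$, while $S'$ alone does not; hence $V(C_u) \cap (S'\cup\{u\}) = \{u, w_u\}$ for a unique $w_u \in S'$. Since $S$ is independent, $uw_u \notin E(G)$, so $u$ and $w_u$ sit diagonally in $C_u$, and writing $C_u = u\,a_u\,w_u\,b_u$ we obtain $a_u, b_u \in N(u) \cap N(w_u)$.

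Next I would exploit 4-connectivity. In a 4-connected planar triangulation, no triangle is separating, so every triangle is a face, and therefore the neighborhood $N(w)$ of every vertex $w$ induces a chord-free cycle (its \emph{link}) of length $d(w)$. For $w \in S$ we have $d(w)\le 6$, so each link contributes exactly $d(w) \le 6$ adjacent pairs of vertices and at most $\binom{6}{2} - 6 = 9$ non-adjacent pairs. The key dichotomy is whether $a_u b_u \in E(G)$ or not: if $a_u b_u \in E(G)$, then $u a_u b_u$ is a triangle, hence a face, and since the other face incident to the edge $a_u b_u$ is $w_u a_u b_u$, the vertex $u$ is uniquely determined as the third vertex of the face opposite $w_u$ along $a_u b_u$. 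Summing over $w \in S'$, at most $\sum_{w \in S'} d(w) \le 6|S'|$ vertices $u$ can be of this ``adjacent'' type.

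I would then split into two cases. If $|S \setminus S'| \le 6|S'|$, then $|S|/(9|S'|) \le 7/9 < 1$, and picking any edge $vx$ of $G$ the two incident triangular faces give $|N(v)\cap N(x)|\ge 2 > |S|/(9|S'|)$. Otherwise at least $|S\setminus S'| - 6|S'|$ vertices $u$ have $a_u b_u \notin E(G)$, so their witness $(w_u, \{a_u,b_u\})$ falls among the at most $9|S'|$ non-adjacent-pair slots across all $w \in S'$. Pigeonhole produces a slot $(w, \{a, b\})$ with at least $(|S\setminus S'| - 6|S'|)/(9|S'|)$ preimages, all of which are common neighbors of $a$ and $b$; counting $w$ itself as one more common neighbor yields
\[
|N(a)\cap N(b)| \ge \frac{|S\setminus S'|-6|S'|}{9|S'|} + 1 = \frac{|S\setminus S'|}{9|S'|} + \frac{1}{3} \ge \frac{|S\setminus S'|}{9|S'|} + \frac{1}{9} = \frac{|S|}{9|S'|}.
\]

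The main obstacle is the calibration of constants: a coarse count that ignores the adjacent/non-adjacent split in each link only yields a factor $\binom{6}{2}=15$, so obtaining the sharp $9$ demands the chord-freeness of links (forced by 4-connectivity) to fold the ``adjacent'' witnesses into a negligible additive correction, which is then absorbed into the trivial case $|S\setminus S'| \le 6|S'|$.
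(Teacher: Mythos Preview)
Your argument is correct and is essentially the standard proof. The paper does not reprove this lemma (it is quoted from Lo, 2020), but the commented-out passage inside the proof of the subsequent lemma follows exactly the same line: maximality attaches to each element of $S\setminus S'$ a 4-cycle witness $(u,\{v,x\})$ with $u\in S'$ and $v,x\in N(u)$; the chord-free link of $u$ (no separating triangles, by 4-connectivity) has at most $6$ adjacent pairs, each pinning down a unique witness, and at most $\binom{6}{2}-6=9$ non-adjacent pairs; a pigeonhole on those pairs, plus $u$ itself as one more common neighbor, yields the bound. The only cosmetic difference is that the paper's sketch applies pigeonhole in two stages (first fix $u\in S'$, then bin by the pair in $N(u)$), whereas you pool all $9|S'|$ non-adjacent slots at once and isolate the degenerate regime $|S\setminus S'|\le 6|S'|$ explicitly.
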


 We use Lemma~\ref{ISET} to derive the following result, which will be applied by setting $t=\lfloor c\log_2 n \rfloor $ or $t=\lfloor c n^{1/4} \rfloor$ for some constant $c>0$.

\begin{lem}\label{4-cycle}
Let $G$ be a 4-connected planar triangulation with $n$ vertices. For any positive integer $t$, one of  the following holds: 
\begin{itemize}
    \item[\textup{(i)}] There exist two distinct vertices 
    $v,x\in V(G)$ such that $|N(v)\cap N(x)| >t$.
\item[\textup{(ii)}] There is an independent set $S$ of vertices of degree at most 6 in $G$, such that  $S$ saturates no 4-cycle in $G$ and $|S|\geq
n/(108t)$. 
\end{itemize}
\end{lem}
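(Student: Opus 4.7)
The plan is to produce a large independent set $S$ of low-degree vertices in $G$ and then apply Lemma~\ref{ISET}; the dichotomy in Lemma~\ref{4-cycle} will emerge naturally from whether the common neighbourhood count delivered by Lemma~\ref{ISET} exceeds $t$ or not.

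First I would use Euler's formula. Because $G$ is a planar triangulation on $n$ vertices, $\sum_{v\in V(G)} d_G(v) = 6n-12$. A simple double-counting then shows that the set $V_0 = \{v\in V(G) : d_G(v)\le 6\}$ of low-degree vertices satisfies $|V_0|\geq (n+12)/4$, since too many vertices of degree at least $7$ would push the degree sum above $6n-12$. Next, since $G[V_0]$ is planar with maximum degree at most $6$, I would invoke either the Four Colour Theorem applied to $G[V_0]$ or a greedy/Caro--Wei bound based on the degree cap to obtain an independent set $S\subseteq V_0$ whose size is a constant fraction of $n$; calibrating the bounds yields $|S|\geq n/12$.

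Now let $S'\subseteq S$ be a maximal subset with the property that $S'$ saturates no $4$-cycle in $G$. By construction $S'$ is an independent set of vertices of degree at most $6$ in $G$ that saturates no $4$-cycle, so $S'$ is precisely the type of set sought in conclusion~(ii); only its cardinality is still to be bounded. Applying Lemma~\ref{ISET} to the pair $(S,S')$ produces distinct vertices $v,x\in V(G)$ satisfying $|N(v)\cap N(x)|\geq |S|/(9|S'|)$.

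Finally I would split on the size of $|N(v)\cap N(x)|$. If $|N(v)\cap N(x)|>t$, conclusion~(i) of Lemma~\ref{4-cycle} holds and we are done. Otherwise $|S|/(9|S'|)\leq t$, which rearranges to $|S'|\geq |S|/(9t)\geq n/(108\,t)$, so $S'$ witnesses conclusion~(ii). The main obstacle is the constant accounting in the second step: the target $|S'|\geq n/(108t)$ requires $|S|\geq n/12$, which sits close to the edge of what Euler's formula combined with an independent-set bound on $G[V_0]$ can deliver, so some care is needed when pinning down these constants (or, if necessary, when trading them against the $1/4$ in $|V_0|\geq (n+12)/4$).
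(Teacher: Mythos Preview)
Your approach is essentially identical to the paper's: build a large independent set of low-degree vertices via Euler's formula and the Four Colour Theorem, take a maximal $4$-cycle-free subset, and feed the pair into Lemma~\ref{ISET} to force the dichotomy. The only slip is arithmetic: with $\delta(G)\ge 4$ the degree-sum bound $4|V_0|+7(n-|V_0|)\le 6n-12$ actually gives $|V_0|\ge (n+12)/3$, not $(n+12)/4$; with this corrected value (which is exactly the paper's ``at least $n/3$''), the Four Colour Theorem yields $|S|\ge n/12$ on the nose and the constants line up as you want.
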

\begin{proof}
Since each vertex of $G$ has degree at least 4 and $|E(G)|=3n-6$ by Euler's formula, there exist at least $n/3$ vertices of degree at most 6 in $G$. Therefore, by the Four Color Theorem, $G$ has an independent set $I$ of vertices of degree at most 6, such that $|I|\geq n/12$.

 Let $S$ be a maximal subset of $I$ such that $S$ saturates no 4-cycle in $G$. If $|S|\ge n/(108t)$ then (ii) holds. So assume $|S|<n/(108t)$.  
By Lemma~\ref{ISET}, there exist $v\neq x\in V(G)$ such that $|N(v)\cap N(x)|\geq |I| /(9|S|)\geq (n/12) /(9n/108t)>t$.

\end{proof}
Note that, when Lemma~\ref{4-cycle} is applied later, we always have $t\geq 2$ and $v, x$ are non-adjacent as $G$ is 4-connected. 

\medskip

We now show that if $G$ does not have too many separating 4-cycles then the independent set in Lemma~\ref{4-cycle} may be required to satisfy additional properties.

\begin{lem}\label{special-set}
Let $G$ be a 4-connected planar triangulation with $n$ vertices and at most $c_1 n/\log_2 n$ separating 4-cycles, where $c_1=(108\times 16\times 541\times 301\times 2)^{-1}$. Then one of the following holds:
\begin{itemize}
\item[\textup{(i)}] There exist non-adjacent  vertices $v,x\in V(G)$ such that $|N(v)\cap N(x)|> \lfloor 16\log_2 n \rfloor$. 
\item [\textup{(ii)}]$G$ has an independent set $S$ with $|S|\ge c_1 n/\log_2 n$ such that 
\subitem{\textup{(a)}}  all vertices in $S$ have degree at most 6,
\subitem{\textup{(b)}} $S$ saturates no 4-cycle, or 5-cycle, or diamond-6-cycle,
\subitem{\textup{(c)}} no vertex in $S$ is contained in a separating 4-cycle, and
\subitem{\textup{(d)}} no vertex in $S$ is adjacent to 3 vertices of a separating 4-cycle.
\end{itemize}
\end{lem}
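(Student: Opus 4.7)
The plan is to cascade Lemmas~\ref{4-cycle}, \ref{5-cycle}, and~\ref{diamond}, and then prune using the scarcity hypothesis on separating $4$-cycles. First apply Lemma~\ref{4-cycle} with $t=\lfloor 16\log_2 n\rfloor$. If its conclusion~(i) holds, we obtain distinct $v,x\in V(G)$ with $|N(v)\cap N(x)|>\lfloor 16\log_2 n\rfloor$; since in a plane triangulation any edge has only two common neighbors of its endpoints, $v$ and $x$ must be non-adjacent, giving conclusion~(i) of the present lemma.

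Otherwise Lemma~\ref{4-cycle}(ii) supplies an independent set $S_0$ of vertices of degree at most $6$, saturating no $4$-cycle, with $|S_0|\ge n/(108\lfloor 16\log_2 n\rfloor) \ge n/(108\cdot 16\,\log_2 n)$. I then feed $S_0$ into Lemma~\ref{5-cycle} to extract $S_1\subseteq S_0$ with $|S_1|\ge |S_0|/541$ that saturates no $5$-cycle (and still no $4$-cycle, being a subset of $S_0$), and feed $S_1$ into Lemma~\ref{diamond} to extract $S_2\subseteq S_1$ with $|S_2|\ge|S_1|/301$ that saturates no diamond-$6$-cycle. The set $S_2$ satisfies conditions (a) and (b), and
$$|S_2|\;\ge\;\frac{n}{108\cdot 16\cdot 541\cdot 301\,\log_2 n}\;=\;\frac{2c_1 n}{\log_2 n}.$$

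It remains to enforce (c) and (d). Let $B$ denote the set of vertices in $V(G)$ that lie on some separating $4$-cycle or are adjacent to three vertices of one. Each separating $4$-cycle contributes at most $4$ vertices of the first kind. For the second kind, fix a separating $4$-cycle $C=v_1v_2v_3v_4$: any vertex adjacent to three of $v_1,\dots,v_4$ is a common neighbor of two consecutive edges of $C$, and in a plane triangulation each edge has exactly two common neighbors of its endpoints (one on each side of $C$), so $C$ contributes boundedly many such vertices. Hence $|B|$ is a bounded multiple of the number of separating $4$-cycles, which is $\le c_1 n/\log_2 n$ by hypothesis; setting $S:=S_2\setminus B$ then yields a set of size at least $c_1 n/\log_2 n$ satisfying (a)--(d), since the factor $2$ in the definition of $c_1$ is exactly the slack reserved to absorb the loss from $B$. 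The main obstacle is this final bookkeeping step: the per-cycle bound on $|B|$ must be tight enough to fit inside the slack in $c_1$, and that tightness comes from the rigidity of a plane triangulation near a $4$-cycle (the two-common-neighbors property) rather than a crude union bound.
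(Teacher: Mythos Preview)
Your cascade through Lemmas~\ref{4-cycle}, \ref{5-cycle}, \ref{diamond} is exactly right and matches the paper; the problem is the final pruning step. You bound $|B|$ by a constant (roughly $12$) times the number of separating $4$-cycles: four vertices on $C$ plus up to eight vertices adjacent to three of its corners. That gives $|B|\le 12\,c_1 n/\log_2 n$, whereas $|S_2|\ge 2c_1 n/\log_2 n$. The ``factor $2$'' in $c_1$ leaves room to delete only \emph{one} vertex per separating $4$-cycle, not twelve, so $|S_2\setminus B|$ could be negative under your estimate. The statement of the lemma fixes $c_1$, so you cannot rescue this by shrinking the constant.

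The missing idea is that you should not bound $|B|$ globally but rather $|B\cap S_2|$, exploiting property~(b) of $S_2$. For a separating $4$-cycle $C$: at most one vertex of $S_2$ lies on $C$ (two would saturate $C$); at most one vertex of $S_2\setminus V(C)$ is adjacent to three vertices of $C$ (two such vertices would share at least two neighbours on $C$ and hence lie together on a $4$-cycle, again saturating it); and these two situations cannot occur simultaneously (the vertex on $C$ and the vertex adjacent to three of $C$'s corners would again sit on a common $4$-cycle). Hence each separating $4$-cycle forces the removal of at most one vertex from $S_2$, and then $|S|\ge 2c_1 n/\log_2 n - c_1 n/\log_2 n = c_1 n/\log_2 n$ as required. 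This is precisely the argument in the paper.
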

\begin{proof}
Suppose (i) does not hold. Then by Lemma~\ref{4-cycle},  $G$ has an independent set $S_1$ of vertices of degree at most 6, such that 
$S_1$ saturates no 4-cycle in $G$ and $|S_1|\geq n/(108\times 16 \log_2 n )$.
By Lemma~\ref{5-cycle}, there exists $S_2\subseteq S_1$ such that  $|S_2|\geq |S_1|/541$ and $S_2$ saturates no 4-cycle or 5-cycle in $G$. By Lemma~\ref{diamond}, there exists $S_3\subseteq S_2$ such that 
$$|S_3|\geq |S_2|/301\geq |S_1|/(541\times 301)\geq 2c_1 n/\log_2 n$$ 
and $S_3$ saturates no 4-cycle, or 5-cycle, or diamond-6-cycle in $G$. Thus $S_3$ satisfies (a) and (b).

To obtain $S\subseteq S_3$ such that $S$ satisfies (c) and (d), we show that, for any separating 4-cycle $C$ in $G$, $|V(C)\cap S_3|+|T(C)|\le 1$, where $T(C):=\{v\in S_3\setminus V(C):  |N(v)\cap V(C)|\ge 3\}$. Let $C$ be an arbitrary separating 4-cycle in $G$. Note that  $|V(C)\cap S_3|\le 1$, since $S_3$ is independent  and satisfies (b). Also note that  $|T(C)|\le 1$; for,  any two vertices in $T(C)$ are contained in a 4-cycle, a contradiction as $S_3$ satisfies  (b).  Moreover, if $|V(C)\cap S_3|=1$ then $|T(C)|=0$; for, any $v\in T(C)$ and the vertex in $V(C)\cap S_3$ are contained in a 4-cycle, a contradiction as $S_3$ satisfies (b). Hence, $|V(C)\cap S_3|+|T(C)|\le 1$. 

Let $S=S_3\setminus \bigcup_{C} ((V(C)\cap S_3)\cup T(C))$. Then $S$ satisfies (c) and (d), in addition to (a) and (b). Since $G$ has at most $c_1 n/\log_2 n$ separating 4-cycles, $|\bigcup_{C}((V(C)\cap S_3)\cup T(C))|\le c_1 n/\log_2 n$.  Hence, $|S|\geq |S_3|-c_1 n/\log_2n \geq c_1n/\log_2 n$. So  (ii) holds.
\end{proof}

From time to time, we need to find at least two Hamiltonian paths between two given vertices in a  subgraph of a planar triangulation. For this, we need several results on ``Tutte paths'' in planar graphs which 
are defined using the notion of ``bridge''. 
 Let $G$ be a graph and $H\subseteq G$. An {\it $H$-bridge} of $G$ is a subgraph of $G$ induced by either an edge in $E(G)\setminus E(H)$ with both incident vertices in $V(H)$, or all edges in $G-H$ with at least one incident vertex in a single component of $G-H$.
For an  $H$-bridge $B$ of $G$, the vertices in $V(B\cap H)$ are the {\it attachments} of $B$ on $H$.

A path $P$ in a graph $G$ is called a {\it Tutte path} if every $P$-bridge of $G$ has at most three attachments on $P$. If in addition, every $P$-bridge of $G$ containing an edge of some subgraph $C$ of $G$ has at most two attachments on $P$, then $P$ is called a {\it $C$-Tutte path} in $G$.
When proving that 4-connected planar graphs are Hamiltonian connected, Thomassen \cite{Th83} proved a stronger result on Tutte paths in 2-connected planar graphs. 

\begin{lem}[Thomassen, 1983] \label{tuttepath}
Let $G$ be a 2-connected plane graph and $C$ be its outer cycle, and let $x\in V(C)$,  $y\in V(G)\setminus\{x\}$, and $e\in E(C)$. Then $G$ has a $C$-Tutte path $P$ between $x$ and $y$ such that $e\in E(P)$. 
\end{lem}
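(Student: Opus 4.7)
The plan is to proceed by induction on $|V(G)|$. To make the induction carry, I would prove a slightly more flexible statement in which the prescribed end $x$ may be either a vertex of $C$ or an edge of $C$ incident with $e$, and in which the required edge may be chosen freely on $C$; this flexibility is what lets the inductive hypothesis apply cleanly after a 2-separation. The base case $|V(G)|\le 3$ is a triangle, in which the arc of $C$ from $x$ to $y$ containing $e$ is trivially a $C$-Tutte path with no problematic bridges.

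For the inductive step, I would split on whether $G$ has a non-trivial 2-cut. If $\{a,b\}$ is such a cut, write $G = G_1\cup G_2$ with $V(G_1\cap G_2)=\{a,b\}$ and add a virtual edge $ab$ to each side, drawn in the face shared with the other side, to obtain 2-connected plane graphs $G_1',G_2'$ whose outer cycles $C_1,C_2$ are pieces of $C$ closed off by $ab$. Depending on the locations of $x,y$, and the endpoints of $e$ relative to the separation, apply the hypothesis inductively to each $G_i'$ with suitably chosen endpoints (using $a$ or $b$ as a proxy endpoint on the side that contains neither $x$ nor $y$) and a suitably chosen required edge (either $e$ or the virtual edge $ab$). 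Concatenate the two resulting Tutte paths at $a$ and $b$ to form $P$ in $G$; every $P$-bridge of $G$ lies entirely inside one $G_i$ and therefore corresponds to a $P_i$-bridge of $G_i'$ with the same attachments, which yields the $C$-Tutte condition in $G$. If instead $G$ is (essentially) 3-connected, I would adapt Tutte's original argument: pick a face $F\ne C$ sharing the edge $e$ with $C$, apply the inductive hypothesis to the subgraph obtained by removing a judiciously chosen interior vertex or contracting a chord of $F$, and then graft the resulting path back along $e$ and $F$ to reach $y$; the planarity ensures that interior $P$-bridges have their attachments confined to a contiguous arc of $P$, giving the three-attachment bound.

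The delicate part is the bookkeeping across the 2-separation. When the virtual edge $ab$ ends up lying on $P_i$ but the other side of the separation contributes multiple bridges at $a$ and $b$, one must ensure that the inductive application on the other side also produces a path using $ab$ and not some longer detour, or else the resulting $P$-bridge structure in $G$ could inherit too many attachments on $P$. This forces the induction to be formulated with enough options (choosing an endpoint vertex, a required edge on the outer cycle, and allowing either $a$ or $b$ to play the role of that endpoint) so that the correct variant is available on each side, and keeping track of $C$ versus the augmented outer cycle $C_i$ throughout the recursion is the main nontrivial effort.
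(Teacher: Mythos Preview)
The paper does not prove this lemma: it is stated as a result of Thomassen \cite{Th83} and used as a black box throughout, so there is no ``paper's own proof'' to compare against. Your task here was only to cite the result, not to reprove it.

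That said, your sketch is in the spirit of Thomassen's original argument (induction on $|V(G)|$, split across a 2-separation with a virtual edge, and handle the essentially 3-connected case by a Tutte-style reduction), but the 3-connected branch as you describe it is too vague to constitute a proof. Thomassen's actual induction does not proceed by ``removing a judiciously chosen interior vertex or contracting a chord of $F$''; rather, it strips off the outer cycle on one side of a carefully chosen path along $C$, applies induction to the resulting smaller 2-connected plane graph, and then reattaches the stripped bridges, with a case analysis ensuring each reattached bridge picks up at most three (and at most two if it touches $C$) attachments. Your description of the 3-connected step does not explain why the bridges of the grafted path obey the attachment bounds, and the phrase ``planarity ensures that interior $P$-bridges have their attachments confined to a contiguous arc of $P$'' is not by itself enough---contiguity of attachments on $P$ does not imply there are at most three of them. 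If you intend to actually write out a proof, you would need to follow Thomassen's reduction more closely or supply the missing bridge-counting argument.
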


Note that if the graph $G$ in Lemma~\ref{tuttepath} has no 2-cut  contained in $V(C)$ and no 3-cut separating $C$ from some vertex in $V(G)\setminus V(C)$ and $e\neq xy$, then the path $P$ is in fact a Hamiltonian path between $x$ and $y$ in $G$. Later when we say that ``by Lemma~\ref{tuttepath}, we find a Hamiltonian path $P$'' we are actually using this observation. 

A {\it near triangulation} is a plane graph in which all faces except possibly its infinite face are bounded by triangles.  We now derive a simple result on the number of Hamiltonian paths between two given vertices in near triangulations.

\begin{lem}\label{uw-path}
Let $G$ be a near triangulation with outer cycle $C:=uvwxu$ and assume that $G\ne C+vx$ and  $G$ has no separating triangles. Then one of the following holds:
\begin{itemize}
\item[\textup{(i)}] $G-\{v, x\}$ has at least two Hamiltonian paths between $u$ and $w$.
\item[\textup{(ii)}] $G-\{v,x\}$ is a path between  $u$ and $w$ and, hence, outer planar.
\end{itemize}
\end{lem}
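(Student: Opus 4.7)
The plan is to analyze the structure of $H := G - \{v, x\}$ and invoke Thomassen's Tutte-path result (Lemma~\ref{tuttepath}) to produce two distinct Hamiltonian $u$-$w$ paths in the non-degenerate case, while pinning down the degenerate case that yields conclusion (ii).

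First I eliminate the possibility of either diagonal of the outer $4$-cycle being an edge. If $vx \in E(G)$, then $vx$ chords $C$ into triangles $uvx$ and $vwx$, which by the no-separating-triangles hypothesis must both be faces, forcing $G = C + vx$ and contradicting the hypothesis. An identical argument applied to $uw$ shows that if $uw \in E(G)$ then $G = C + uw$, in which case $H$ is the single edge $uw$, a $u$-$w$ path, so (ii) holds. Hence I may assume $uw, vx \notin E(G)$. Because $G$ is a near-triangulation, the interior face at each outer edge has its third vertex strictly interior, so the interior neighbors of $v$ in cyclic order from $u$ to $w$ form a path $P_v = uy_1 \cdots y_k w$ in $H$ with $k \geq 1$, and similarly the interior neighbors of $x$ form a path $P_x = uz_1 \cdots z_l w$ in $H$ with $l \geq 1$. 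The outer face of $H$ in its inherited embedding is bounded by the closed walk $W = uy_1 \cdots y_k w z_l \cdots z_1 u$.

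If $P_v = P_x$ as paths, the fans $F_v$ and $F_x$ together tile the interior of the outer $4$-cycle, so $V(G) = V(P_v) \cup \{v, x\}$ and $H$ is itself the path $P_v$, giving (ii). Assume henceforth $P_v \neq P_x$. If in addition $P_v$ and $P_x$ meet only at $u, w$, then $W$ is a simple cycle $C'$ and $H$ is a $2$-connected near-triangulation with outer cycle $C'$. I apply Lemma~\ref{tuttepath} to $H$ twice, with prescribed edges $uy_1$ and $uz_1$ respectively, to obtain two $C'$-Tutte paths from $u$ to $w$ whose first edges differ and which are therefore distinct. By the observation following Lemma~\ref{tuttepath}, each such path is Hamiltonian provided $H$ has no $2$-cut contained in $V(C')$ and no $3$-cut separating $C'$ from an interior vertex. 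I verify these conditions: since $C'$ has length $k + l + 2 \geq 4$, the middle region of $H$ bounded by $W$ cannot be a single face and must contain either an interior vertex or a chord connecting the $P_v$-side of $C'$ to the $P_x$-side, and this connectivity prevents $\{u, w\}$ (the only candidate bad $2$-cut in $V(C')$) from actually disconnecting $H$. Any other candidate small cut in $V(C')$ would lift via $\{v, x\}$ to a configuration of $G$ that is excluded by the combination of $uw, vx \notin E(G)$ and the no-separating-triangles hypothesis; in the boundary case where $H$ itself collapses to the cycle $C'$, the two arcs of $C'$ already provide the two required Hamiltonian $u$-$w$ paths.

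Finally, if $P_v$ and $P_x$ share some internal vertex $y$, then $y$ is a cut vertex of $H$ and decomposes $H$ into two near-triangulations $M_1 \ni u$ and $M_2 \ni w$ joined at $y$. I apply Lemma~\ref{tuttepath} inside $M_1$ twice with different prescribed edges at $u$ (recursing into smaller pieces if $P_v, P_x$ share still more vertices) to produce two distinct Hamiltonian $u$-$y$ paths, and apply it once in $M_2$ to obtain a Hamiltonian $y$-$w$ path. Concatenating at $y$ gives two distinct Hamiltonian $u$-$w$ paths in $H$, establishing (i). The main obstacle throughout is the cut-verification in the internally-disjoint main case: because the no-separating-triangles hypothesis does not preclude separating $4$-cycles, I must carefully trace how each potentially bad small cut of $H$ would lift back through $\{v, x\}$ into a configuration of $G$ already excluded by the remaining hypotheses, or forces $H$ to collapse to the cycle $C'$ itself and thereby immediately yield the two Hamiltonian paths.
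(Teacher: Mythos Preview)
Your overall strategy coincides with the paper's: both arguments recognize that the cut vertices of $H=G-\{v,x\}$ are exactly the common interior neighbours of $v$ and $x$ (your shared vertices of $P_v$ and $P_x$), decompose $H$ into a chain of blocks, and then apply Lemma~\ref{tuttepath} with two different prescribed outer edges in a non-trivial block to manufacture two Hamiltonian $u$--$w$ paths, falling back to conclusion~(ii) when every block is a single edge. The paper simply phrases this via the block tree of $H$ from the outset, which is somewhat cleaner than your three-way case split on $P_v\cap P_x$, but the content is the same.

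There is, however, a genuine gap in your verification that the Tutte paths are Hamiltonian. You attempt to check the hypotheses of the observation following Lemma~\ref{tuttepath}, asserting that ``$\{u,w\}$ is the only candidate bad $2$-cut in $V(C')$'' and that any other small cut ``would lift via $\{v,x\}$ to a configuration of $G$ that is excluded''. This is false: when $P_v\cap P_x=\{u,w\}$, the graph $H$ can perfectly well have a $2$-cut $\{y_i,z_j\}$ with $y_i\in P_v\setminus\{u,w\}$, $z_j\in P_x\setminus\{u,w\}$ coming from a chord $y_iz_j$ of $C'$; such a cut lifts only to the $4$-cut $\{y_i,z_j,v,x\}$ in $G$, which is \emph{not} excluded by the no-separating-triangle hypothesis. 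So the sufficient condition in the observation can fail, and your argument as written does not close this case. (A concrete instance: put two neighbours $y_1,y_2$ of $v$, two neighbours $z_1,z_2$ of $x$, chords $y_1z_1$, $y_2z_2$, and one interior vertex $m$ joined to all four; then $\{y_1,z_1\}$ is a $2$-cut of $H$ in $V(C')$, yet $G$ has no separating triangle.) The paper sidesteps this by not invoking the observation at all: it asserts directly that ``since $G$ has no separating triangles, $P_s^1$ and $P_s^2$ are Hamiltonian'', which is justified by analysing bridges rather than cuts. A $P$-bridge containing an edge of $C_s$ has its two attachments on a single arc of $C_s$ (the arc cannot cross $u$ or $w$, which lie on $P$), hence both attachments lie in $N(v)$ or both in $N(x)$, and together with $v$ or $x$ they bound a separating triangle; a $P$-bridge missing $C_s$ encloses only interior vertices, so its at most three attachments already separate in $G$ and force a separating triangle there. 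That direct bridge analysis is what you need in place of your cut-lifting claim.

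A smaller issue: in your third case you propose to take ``two different prescribed edges at $u$'' inside $M_1$, but if the first block is a single edge (equivalently $y_1=z_1$) there is only one edge at $u$. The paper instead locates \emph{some} non-trivial block $B_s$ and takes the two outer edges at its endpoint $b_{s-1}$; you should do the same rather than anchoring at $u$.
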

\begin{proof}
If $vx\in E(G)$, then $G=C+vx$ or  $G$ has a separating triangle, contradicting our assumption.  So $vx\notin E(G)$.  Then $G-\{v,x\}$ has a path from 
$u$ to $w$, say $Q$.  Since $G$ has no separating triangles, each block of $G-\{v,x\}$ contains an edge of $Q$. Hence, the blocks of $G-\{v,x\}$ can be labeled as $B_1, \ldots, B_t$ and the cut vertices of $G-\{v,x\}$ can be labeled as $b_1,\ldots, b_{t-1}$ such that $V(B_i\cap B_{i+1})=\{b_i\}$ for $i=1, \ldots, t-1$, and $V(B_i\cap B_j)=\emptyset$ when $|i-j|\ge 2$.  Let $b_0=u$ and $b_t=w$. Moreover, let $C_i$ denote the outer walk of $B_i$ for $1\le i\le t$. See Figure~\ref{blocks}.

\begin{figure}[htbp] 
\centering
\includegraphics[width=11.5cm,height=5.5cm]{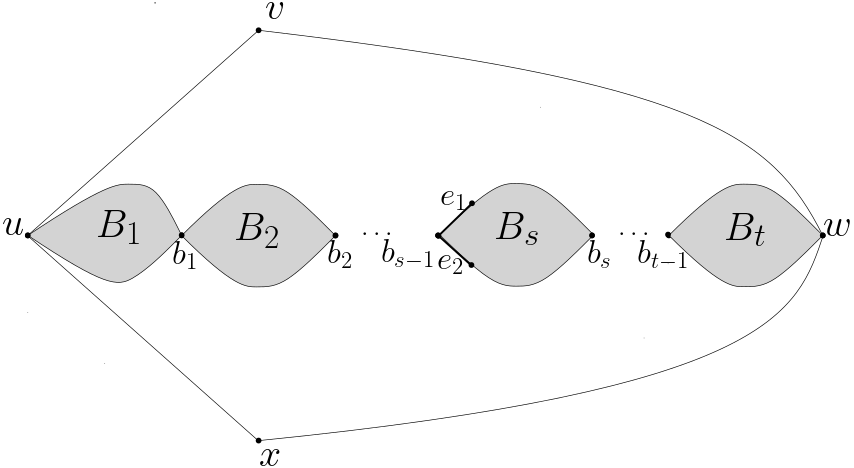}
  \caption{The blocks $B_1, \ldots, B_t$.} 
\label{blocks}
\end{figure}
If $|V(B_i)|=2$ for  $1\leq i\leq t$, then (ii) holds. Hence, we may assume that $|V(B_s)|\geq 3$ for some $s$, where $1\leq s\leq t$. Then $b_{s-1}b_s\notin E(B_s)$, as otherwise, $vb_{s-1} b_sv$ or $xb_{s-1}b_sx$ would be a separating triangle in $G$. 
Let $e_1, e_2$ be the edges of $C_s$ incident with $b_{s-1}$. By Lemma~\ref{tuttepath}, $B_s$ has a $C_s$-Tutte path $P_s^j$ between  $b_{s-1}$ and  $b_s$ such that  $e_j\in E(P_s^j)$, for $j=1,2$. Since $G$ has no separating triangles, $P_s^1$ and $P_s^2$ are  Hamiltonian paths in $B_s$.

For each $1\leq i\leq t$ with $i\ne s$, if $|V(B_i)|\geq 3$, we apply Lemma~\ref{tuttepath} to $B_i$ and find a Hamiltonian path $P_i$ between  $b_{i-1}$ and $b_i$ in $B_i$; if $|V(B_i)|=2$, let $P_i=b_{i-1}b_i$. Then $(\bigcup_{i\ne s}P_i)\cup P_s^1$ and $(\bigcup_{i\ne s}P_i)\cup P_s^2$ are  distinct Hamiltonian paths in $G-\{v,x\}$ between $u$ and $w$. So (i) holds.  
\end{proof}

\medskip

We also  need the following result of Thomas and Yu \cite{TY94}, which was used to extend Tutte's theorem on Hamiltonian cycles in planar graphs to projective planar graphs.

\begin{lem}[Thomas and Yu, 1994] \label{tuttepath-1}
Let $G$ be a 2-connected plane graph with outer cycle $C$, and let $u,v\in V(C)$ and $e,f\in E(C)$ such that $u, e, f, v$ occur on $C$ in clockwise order. Then $G$ has a $uCv$-Tutte path $P$ between $u$ and $v$ such that $e,f\in E(P)$.
\end{lem}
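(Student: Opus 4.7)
The plan is to proceed by strong induction on $|V(G)| + |E(G)|$, following the framework used by Thomassen in the proof of Lemma~\ref{tuttepath} but with the additional burden of tracking the second prescribed edge $f$ and the asymmetric $uCv$-bridge condition. Write $C = uCv \cup vCu$, where $uCv$ denotes the clockwise arc through $e$ and $f$. In the base case $G = C$, I take $P$ to be $C$ minus a single edge distinct from $e$ and $f$ (removed from $uCv$ if such an edge exists there, otherwise from $vCu$); the unique $P$-bridge is a single edge with two attachments, so both the Tutte and $uCv$-Tutte conditions hold trivially.

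For the inductive step, I would first handle the case where $C$ has a chord $c_1c_2$. This chord splits $G$ into two plane subgraphs $G_1$ and $G_2$ with $V(G_1 \cap G_2) = \{c_1, c_2\}$ and $E(G_1 \cap G_2) = \{c_1c_2\}$. Depending on whether $c_1, c_2$ both lie on $uCv$, both on $vCu$, or one on each arc, and on how $u, v, e, f$ are distributed among $G_1$ and $G_2$, I assemble $P$ by concatenating Tutte paths from the two pieces: the induction hypothesis is invoked on the piece whose outer arc still carries prescribed edges that must sit on the corresponding $uCv$-segment, while Lemma~\ref{tuttepath} is invoked on the other piece with $c_1c_2$ prescribed as the required edge on its outer cycle, so that the two sub-paths glue along the chord. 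The reduction when $G$ has a 2-cut $\{a, b\}$ with $a$ or $b$ not on $C$ is analogous: split $G$ along $\{a, b\}$, add the virtual edge $ab$ if needed, apply induction to the piece containing all of $u, v, e, f$, and apply Lemma~\ref{tuttepath} to the other piece(s).

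It remains to treat the case where $G$ has neither a chord of $C$ nor an internal 2-cut, so that $G$ is essentially 3-connected relative to $C$. Here I would select an interior vertex or edge near the arc $uCv$ close to $f$, remove or contract it, and reduce to a smaller plane graph satisfying the hypotheses; the resulting Tutte path can then be extended locally to cover the deleted/contracted piece. Equivalently, one can first apply Lemma~\ref{tuttepath} to produce a $C$-Tutte path $P_e$ from $u$ to $v$ through $e$, observe that the desired conclusion follows immediately when $f \in E(P_e)$, and otherwise reroute $P_e$ through the unique internal face incident with $f$ in order to incorporate $f$ without creating a $P$-bridge with too many attachments.

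The main obstacle is the book-keeping during the chord and 2-cut reductions: in each subcase one must verify that every $P$-bridge contained in a single $G_i$ already satisfies the required attachment bound there, and that no bridge straddling the chord (or 2-cut) accumulates extra attachments when the two sub-paths are concatenated. The asymmetry of the bridge condition --- only bridges containing an edge of $uCv$ are forced down to two attachments, while bridges meeting only $vCu$ may have three --- makes the induction hypothesis strictly stronger than Lemma~\ref{tuttepath} on the $uCv$ side and strictly weaker on the $vCu$ side, so choosing which lemma to apply to which piece, and in which orientation, is what makes the case analysis delicate.
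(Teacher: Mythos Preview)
The paper does not prove this lemma at all; it is quoted from Thomas and Yu \cite{TY94} and used as a black box, so there is no ``paper's own proof'' to compare your attempt against.

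As for the sketch itself, the chord and $2$-cut reductions are in the right spirit, but the essentially $3$-connected case is a genuine gap rather than a routine step. Your first suggestion there---apply Lemma~\ref{tuttepath} to get a $C$-Tutte path through $e$ and then ``reroute'' it through the face incident with $f$---does not work: the path produced by Lemma~\ref{tuttepath} need not be Hamiltonian, so $f$ may lie inside a nontrivial bridge, and any local rerouting to pick up $f$ can create bridges with more than three attachments or violate the two-attachment bound along $uCv$ elsewhere. Your second suggestion---``remove or contract an interior vertex or edge near $f$''---is too vague to be an argument: you have not said which vertex, why the resulting graph is still $2$-connected, or why the inductively obtained path extends back without spoiling the bridge bounds. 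In the actual Thomas--Yu proof this is exactly the hard part, handled by a specific decomposition near one end of the path together with a substantial case analysis; it is not a one-line reduction to Lemma~\ref{tuttepath}. Finally, your base case is misstated: ``$C$ minus a single edge'' is a path between the endpoints of the deleted edge, not between $u$ and $v$; the intended base path is simply the arc $uCv$, whose unique nontrivial bridge is the other arc $vCu$ with attachments $\{u,v\}$.
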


We now use Lemma~\ref{tuttepath-1} to prove a result similar to Lemma~\ref{uw-path}. 

\begin{lem} \label{uv-path}
Let $G$ be a near triangulation with outer cycle $C: =uvwxu$ and assume that $G$ has no separating triangles. Then one of the following holds:
\begin{itemize}
   \item [\textup{(i)}]  $G-\{w,x\}$ is an outer planar near triangulation.  
    \item [\textup{(ii)}]  $G-\{w,x\}$ has at least two Hamiltonian paths between $u$ and $v$.
\end{itemize}
\end{lem}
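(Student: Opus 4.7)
The plan is to mimic the proof of Lemma~\ref{uw-path}. Set $G' := G - \{w,x\}$. Since $uv \in E(C) \subseteq E(G)$ and $u,v \notin \{w,x\}$, the edge $uv$ lies in $G'$, providing a trivial $u$-$v$ path. First I would describe the outer cycle of $G'$ in the inherited planar embedding: it is the walk $u, v, p_1, \ldots, p_r, q_2, \ldots, q_s, u$, where $p_1, \ldots, p_r$ are the interior neighbors of $w$ in the cyclic order around $w$ going from $v$ to $x$, and $q_1 = p_r, q_2, \ldots, q_s$ are the interior neighbors of $x$ in the cyclic order around $x$ going from $w$ to $u$; the identification $p_r = q_1$ comes from the inner face triangle on edge $wx$. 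The assumption that $G$ has no separating triangles forces this walk to be a simple cycle $C'$, so $G'$ is a near triangulation with outer cycle $C'$.

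Next, following Lemma~\ref{uw-path}, I would examine the block decomposition of $G'$. Label its blocks $B_1,\ldots,B_t$ along the $u$-$v$ path in the block-cut tree, with cut vertices $b_1,\ldots,b_{t-1}$ so that $V(B_i\cap B_{i+1}) = \{b_i\}$; set $b_0 := u$ and $b_t := v$. Using that $G$ has no separating triangles, every block of $G'$ lies on this $u$-$v$ block-cut path (a branch block, attached at a single cut vertex $b$, would be enclosed in $G$ by a cycle through $\{b,w,x\}$ which, when reduced using the near-triangulation structure, forces a separating triangle involving $w$ or $x$). If $|V(B_i)| = 2$ for every $i$, then $G'$ itself is a $u$-$v$ path, which is vacuously an outer planar near triangulation, so (i) holds.

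Otherwise, pick $s$ with $|V(B_s)| \ge 3$ and let $C_s$ be the outer cycle of $B_s$. I claim $b_{s-1}b_s \notin E(B_s)$: cut vertices of the near triangulation $G'$ lie on $C'$, so $b_{s-1},b_s \in V(C')$; their position on $C'$ forces them to be common neighbors of $w$ (if both lie on the $v$-to-$p_r$ arc) or of $x$ (if both lie on the $p_r$-to-$u$ arc), hence the triangle $wb_{s-1}b_s$ or $xb_{s-1}b_s$ would be a separating triangle in $G$, a contradiction. Let $e_1, e_2$ be the two edges of $C_s$ incident with $b_{s-1}$. By Lemma~\ref{tuttepath}, $B_s$ has a $C_s$-Tutte path $P_s^j$ from $b_{s-1}$ to $b_s$ with $e_j \in E(P_s^j)$, for $j=1,2$; since $G$ has no separating triangles, each $P_s^j$ is Hamiltonian in $B_s$. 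For each $i \ne s$, let $P_i$ be a Hamiltonian path from $b_{i-1}$ to $b_i$ in $B_i$, obtained via Lemma~\ref{tuttepath} if $|V(B_i)| \ge 3$ and as the single edge $b_{i-1}b_i$ otherwise. Concatenating gives two distinct Hamiltonian $u$-$v$ paths in $G'$, establishing (ii).

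The main technical hurdle is justifying that the block-cut tree of $G'$ is actually a $u$-$v$ path and that consecutive cut vertices $b_{s-1},b_s$ share $w$ or $x$ as a common neighbor in $G$. Both facts rely on the absence of separating triangles combined with the planar near-triangulation structure, analogous to the corresponding step in the proof of Lemma~\ref{uw-path}, with the added subtlety that here the removed vertices $w$ and $x$ are adjacent on the outer cycle of $G$ and share the common neighbor $p_r = q_1$, splitting $C'$ into a "$w$-arc" through the $p_i$'s and an "$x$-arc" through the $q_j$'s that must be handled separately.
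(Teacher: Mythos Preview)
Your argument has a fundamental gap rooted in the one feature that distinguishes this lemma from Lemma~\ref{uw-path}: here the two surviving outer vertices $u$ and $v$ are \emph{adjacent}, whereas in Lemma~\ref{uw-path} the survivors $u$ and $w$ are not.

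You correctly derive that (for $|V(G)|\ge 5$) the outer walk of $G':=G-\{w,x\}$ is the simple cycle $C'=u\,v\,p_1\cdots p_r\,q_2\cdots q_s\,u$. But this already forces $G'$ to be a $2$-connected near triangulation, so the block decomposition is trivial: $t=1$, $B_1=G'$, $b_0=u$, $b_1=v$. Your key claim ``$b_{s-1}b_s\notin E(B_s)$'' now fails outright: $uv$ is an edge of $C'$, and $u,v$ do not both lie on the $w$-arc or both on the $x$-arc --- they are precisely the two vertices where those arcs are joined by the edge $uv$. Consequently one of the two edges $e_1,e_2$ of $C'$ at $b_0=u$ is $uv$ itself, and the $C'$-Tutte path from $u$ to $v$ through $uv$ supplied by Lemma~\ref{tuttepath} is just the single edge $uv$, which is not Hamiltonian once $|V(G')|\ge 3$. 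The block-chain trick therefore produces at most one Hamiltonian $u$--$v$ path. (Relatedly, your trigger for case~(i), ``$|V(B_i)|=2$ for all $i$'', only fires when $|V(G)|=4$; it misses the genuine outer-planar situation $G'=C'$ with $|V(G')|\ge 3$, which has exactly one Hamiltonian $u$--$v$ path.)

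The paper takes a different route. It inducts on $|V(G)|$, first reducing to the case where $u$ and $v$ each have at least two interior neighbours. It then proves that $H:=(G-\{w,x\})-u$ (or the symmetric graph with $v$ removed) is $2$-connected. Letting $u_1,u_2$ be the two neighbours of $u$ on the outer cycle $D$ of $H$ (with $u_1\in N(x)$, $u_2\in N(v)$) and $y$ the common interior neighbour of $w$ and $x$, it applies Lemma~\ref{tuttepath} to obtain a Hamiltonian $u_1$--$v$ path in $H$ through an edge at $y$, and Lemma~\ref{tuttepath-1} (the Thomas--Yu lemma, which allows \emph{two} prescribed outer edges) to obtain a Hamiltonian $u_2$--$v$ path in $H$ through edges at $y$ and at $u_1$. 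Extending by $uu_1$ and $uu_2$ yields two distinct Hamiltonian $u$--$v$ paths in $G-\{w,x\}$. The essential extra ingredient, absent from your attempt, is Lemma~\ref{tuttepath-1}.
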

\begin{proof}
 We apply induction on $|V(G)|$. If $|V(G)|=4$ then we see that (i) holds trivially. So assume $|V(G)|\ge 5$. Then $uw,vx\notin E(G)$, as $G$ has no separating triangles.

We may assume that $u,v$ each have at least two neighbors in $V(G)\setminus V(C)$.  For, otherwise, by symmetry assume that $u$ has a unique neighbor in $V(G)\setminus V(C)$, say $u'$. Now $G':=G-u$ is a near triangulation with outer cycle $C':=u'vwxu'$ and $G'$ has no separating triangles. Hence, by induction, $G'-\{w,x\}$ is an outerplanar near triangulation, or $G'-\{w,x\}$ has at least two Hamiltonian paths between  $u'$ and $v$. In the former case, (i) holds; in the latter case, (ii) holds by extending the Hamiltonian paths in $G'$ from $u'$ to $u$ along the edge $u'u$.

Next, we claim that $(G-\{w,x\})-u$ or $(G-\{w,x\})-v$ is 2-connected.  For, suppose  $(G-\{w,x\})-u$ is not 2-connected. Then  $(G-\{w,x\})-u$  can be written as the union of two subgraphs $B_1$ and $B_2$ such that $|V(B_1\cap B_2)|\le 1$, $B_1-B_2\ne \emptyset$, and $B_2-B_1\ne \emptyset$. Without loss of generality, assume that $v\in V(B_2)$. (Indeed, $v\in V(B_2)\setminus V(B_1)$.)
  We further choose $B_1,B_2$ to minimize $B_1$. Then $B_1$ is connected and $B_1$ has no cut vertex.   
By planarity, there exists a unique vertex $y\in N_G(w)\cap N_G(x)$.  If $y\in V(B_2)$ then $V(B_1\cap B_2)\cup \{u,x\}$ is a 2-cut in $G$ or induces a separating triangle in $G$, a contradiction.  So $y\in V(B_1)\setminus V(B_2)$. Now $u$ has a neighbor in $V(B_1)\setminus V(B_2)$; as otherwise, $V(B_1\cap B_2)\cup \{w,x\}$  is a 2-cut in $G$ or induces a separating triangle in $G$, a contradiction. This implies that $G[B_1+u]$ is 2-connected.  Now, we repeat this argument for $(G-\{w,x\})-v$. Suppose $(G-\{w,x\})-v$ is not 2-connected. Then $(G-\{w,x\})-v$  can be written as the union of two subgraphs $B_1'$ and $B_2'$ such that $|V(B_1'\cap B_2')|\le 1$, $B_1'-B_2'\ne \emptyset$, $B_2'-B_1'\ne \emptyset$, and  $u\notin V(B_1')\setminus V(B_2')$. Then, since $G[B_1+u]$ is 2-connected and $y\in V(B_1)\setminus V(B_2)$, we have $y\in V(B_2')$. Now, $V(B_1'\cap B_2')\cup \{v,w\}$  is a 2-cut in $G$ or induces a separating triangle in $G$, a contradiction.

By symmetry, we may assume that $H:=(G-\{w,x\})-u$ is 2-connected.
Let $D$ denote the outer cycle of $H$ and let $u_1,u_2\in N_G(u)\cap V(D)$ such that $u_1\in N_G(x)$ and $u_2\in N_G(v)$. Since $u$ has at least two neighbors in $V(G)\setminus V(C)$, $u_1\ne u_2$. Let $y\in N_G(w)\cap N_G(x)$. Choose an edge $e\in E(D)$ incident with $y$, and an edge $f\in E(D)$ incident with $u_1$. By Lemma~\ref{tuttepath}, $H$ has a $D$-Tutte path $P$ between $u_1$ and $v$ such that $e\in E(P)$. By Lemma~\ref{tuttepath-1}, $H$ has a $vDu_2$-Tutte path $Q$ between $u_2$ and $v$ such that  $e,f\in E(Q)$. Since $G$ has no separating triangles, we see that both $P,Q$ are Hamiltonian paths in $H$. Now $P\cup u_1u $ and $Q\cup u_2u$ are distinct Hamiltonian 
paths in $G-\{w,x\}$ between $u$ and $v$, and (ii) holds.
\end{proof}

Later we will need the following result of Jackson and Yu \cite{JY02} on Hamiltonian cycles through more than two given edges in planar triangulations. This result  was used to show that planar triangulations with few separating triangles are Hamiltonian. 

\begin{lem}[Jackson and Yu, 2002] \label{4edges}
Let $G$ be a planar triangulation with no separating triangles. Let $T, T_1, T_2$ be distinct triangles in $G$. Let $V(T)=\{u,v,w\}$. Then there exists a Hamiltonian cycle $C$ in $G$ and edges $e_1\in E(T_1), e_2\in E(T_2)$ such that $uv,uw,e_1,e_2$ are dinstinct and contained in $E(C)$.
\end{lem}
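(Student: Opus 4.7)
The plan is to embed $G$ so that $T$ is the outer face and reduce the claim to finding a prescribed Hamiltonian path in the near-triangulation $H := G - u$. Since $G$ is a planar triangulation with no separating triangles, $G$ is $4$-connected and every triangle of $G$ bounds a face; in particular $T, T_1, T_2$ are distinct faces of $G$. After making $T$ the outer face, a Hamiltonian cycle of $G$ through $uv$ and $uw$ corresponds exactly to a Hamiltonian $v$-$w$ path in $H$ augmented by $uv$ and $uw$. The outer cycle $F$ of $H$ is $v n_1 n_2 \cdots n_k w v$, where $n_1, \ldots, n_k$ are the remaining neighbors of $u$ in cyclic order and $vw$ is the third edge of $T$. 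The task becomes: find a Hamiltonian $v$-$w$ path in $H$ using at least one edge of $T_1$ and at least one edge of $T_2$, where the two chosen edges are distinct and distinct from $uv, uw$ (possible since $T_1, T_2 \neq T$).

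First I would dispose of the case in which both $T_1, T_2$ contribute an edge to $F$. This happens automatically whenever $u \in V(T_i)$, because the edge of $T_i$ opposite $u$ then lies on $F$. Choosing $e_i \in E(T_i) \cap E(F)$ distinct from each other and from $vw$ and placing them in the correct cyclic order on $F$, I would invoke Lemma~\ref{tuttepath-1} with outer cycle $F$, endpoints $v, w$, and prescribed boundary edges $e_1, e_2$. Since $G$ is $4$-connected with no separating triangle, every $P$-bridge of the resulting Tutte path $P$ has at most two attachments, forcing $P$ to be a Hamiltonian $v$-$w$ path of $H$; then $P + \{uv, uw\}$ is the desired Hamiltonian cycle.

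The substantive case is when at least one of $T_1, T_2$ lies in the interior of $H$, i.e., has no edge on $F$. Here a stronger Tutte-type theorem is needed, because Lemmas~\ref{tuttepath} and~\ref{tuttepath-1} only prescribe edges on the outer face. I would prove, by induction on the number of vertices, the following single-triangle statement: in a $2$-connected plane near-triangulation with outer cycle $C$ and no separating triangle, for any distinct $x, y \in V(C)$ and any triangle $T^*$ of the graph, there is a Hamiltonian $x$-$y$ path through at least one edge of $T^*$. The inductive step splits the graph along $2$- or $3$-separators that isolate $T^*$ from parts of the boundary, applying Lemma~\ref{tuttepath} or Lemma~\ref{tuttepath-1} on the side of the cut that does not contain $T^*$ and recursing on the side that does. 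A further layer of induction, amalgamating two single-triangle versions along a separating cut, upgrades the conclusion to simultaneous prescription of two interior triangles, and then applying it to $H$ with $T_1$ and $T_2$ yields the required path.

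The main obstacle is exactly this simultaneous prescription of $T_1$, $T_2$, the endpoints $v, w$, and the outer edge $vw$. When a splitting cut isolates $T_1$ from $T_2$, the subproblems on the two sides receive different boundary/interior side-conditions that must compose consistently at the cut vertices; degenerate configurations --- $T_1$ and $T_2$ sharing a vertex, or one lying immediately adjacent to $F$, or a prescribed edge coinciding with a cut edge --- proliferate the number of cases, each of which must be reduced either to the basic Tutte lemmas or to a strictly smaller instance of the induction. This careful bookkeeping is the main technical content of the Jackson--Yu argument, and is where the crux of the proof lies; once the strengthened Tutte theorem is in place, Lemma~\ref{4edges} follows by the reduction described in the first paragraph.
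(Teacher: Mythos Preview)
The paper does not prove Lemma~\ref{4edges}; it is quoted as a result of Jackson and Yu \cite{JY02} and used as a black box. So there is no ``paper's own proof'' to compare against.

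As a standalone sketch your outline is plausible in spirit but is not a proof. The easy case (both $T_1,T_2$ contributing an edge to the outer cycle of $H$) is handled correctly via Lemma~\ref{tuttepath-1}. For the substantive case you only announce an inductive statement (``for any triangle $T^*$ there is a Hamiltonian $x$--$y$ path through an edge of $T^*$'') and then assert that a ``further layer of induction'' upgrades it to two triangles simultaneously. Neither of these inductions is carried out, and the second is precisely the difficulty: when a $2$- or $3$-cut separates $T_1$ from $T_2$, the path must enter and leave each side through specified cut vertices \emph{and} pick up the prescribed triangle edge on each side, and the interface conditions at the cut are not symmetric with the original boundary conditions. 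You also need to ensure the chosen $e_1,e_2$ end up distinct from each other and from $uv,uw$, which requires care when $T_1,T_2$ share an edge or when one of them is adjacent to $T$. All of this is exactly what the Jackson--Yu paper works out (their proof proceeds via a stronger Tutte-path statement for near-triangulations that tracks two designated faces), and you have essentially deferred to that by saying the bookkeeping ``is the main technical content of the Jackson--Yu argument.'' That is an accurate diagnosis, but it means your proposal is a reduction to \cite{JY02} rather than an independent proof.
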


\section{Planar triangulations with  few separating 4-cycles}

 In this section, we consider 4-connected planar triangulations without many separating 4-cycles, as a  natural relaxation of 5-connected planar triangulations. The main objective here is to show that the number of Hamiltonian cycles in such graphs is quadratic in the number of vertices. 
 First, we need the following result. 

\begin{lem}\label{2edge}
Let  $c_1=(108\times 16\times 541\times 301\times 2)^{-1}$, and let $G$ be a 4-connected planar triangulation with $n$ vertices and at most $c_1 n/\log_2 n$ separating 4-cycles. Then for any two edges $e, f$ in a triangle in $G$, 
there are at least $c_1^2 n$ Hamiltonian cycles in $G$ containing both $e$ and $f$.
\end{lem}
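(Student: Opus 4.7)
The plan is to apply Lemma~\ref{special-set} to $G$ and proceed by cases. Let $T=uvw$ be the triangle containing $e=uv$ and $f=uw$; since $G$ is 4-connected, $G$ has no separating triangles, so $T$ is a face of $G$.

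In Case~(ii) of Lemma~\ref{special-set}, there is an independent set $S$ with $|S|\ge c_1 n/\log_2 n$ satisfying (a)--(d). I would follow the scheme of Lemma~\ref{edgesetF}, but restrict to edge sets $F$ of $|S|$ edges (one per vertex of $S$) for which $e,f\notin F$. Since at most three vertices of $S$ are endpoints of $e$ or $f$ and each such vertex has degree $5$ or $6$ in $G$, a constant fraction (at least $(4/5)^3$) of all edge sets $F$ is ``valid''. For each valid $F$, the graph $G-F$ is 4-connected and planar and contains both $e$ and $f$; I would select a Hamiltonian cycle of $G-F$ through both of these edges, invoking an extension of Tutte's theorem that prescribes two edges in a 4-connected planar graph (obtainable from the Tutte path machinery of Section~2 or from Sanders' theorem). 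By the overcounting bound in the proof of Lemma~\ref{edgesetF}, the number of distinct Hamiltonian cycles of $G$ through $e,f$ so produced is at least $(4/5)^3(3/2)^{|S|}$, which is $2^{\Omega(n/\log n)}$ and far exceeds $c_1^2 n$.

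In Case~(i) of Lemma~\ref{special-set}, there exist non-adjacent $p,q\in V(G)$ with $k:=|N(p)\cap N(q)|>\lfloor 16\log_2 n\rfloor$. Let $Y=N(p)\cap N(q)=\{y_1,\dots,y_k\}$ in the cyclic order in which they appear around $\{p,q\}$ in the planar embedding. Between consecutive common neighbors $y_i,y_{i+1}$ the 4-cycle $py_iqy_{i+1}$ is either one of the at most $c_1 n/\log_2 n$ separating 4-cycles in $G$, or is face-bounded; in the latter case the diagonal edge $y_iy_{i+1}$ belongs to $G$ and provides a local ``routing switch'' allowing a Hamiltonian cycle through the gadget to pass through $p$ or through $q$. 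Combining these binary switches with Tutte-path arguments (Lemmas~\ref{tuttepath} and \ref{tuttepath-1}) to force $e,f$ onto the cycle and complete the cycle through the remainder of $G$, one obtains the required $\ge c_1^2 n$ Hamiltonian cycles through $e,f$.

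The main obstacle is in Case~(ii): ensuring that the Hamiltonian cycle of $G-F$ can actually be selected to contain both prescribed edges $e$ and $f$. This amounts to a ``through-two-edges'' version of Tutte's theorem for 4-connected planar graphs, and must be set up with some care because $G-F$ is not necessarily a triangulation. In Case~(i) the secondary difficulty is to extract $\Omega(n)$ distinct Hamiltonian cycles from a double-wheel substructure of size only $O(\log n)$: the gadget alone yields only $O(k^2)=O(\log^2 n)$ routing patterns, so the linear bound $c_1^2 n$ must be reached by coupling the switches inside the gadget with the many Hamiltonian completions in the exterior part of $G$ (using Lemma~\ref{4edges} and the Tutte-path lemmas).
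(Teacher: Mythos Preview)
Your Case~(ii) is essentially the paper's argument, though the paper handles the two prescribed edges more cleanly: since $S$ is independent and $T$ is a triangle, $|S\cap V(T)|\le 1$; simply set $S_1=S\setminus V(T)$, so every admissible $F$ automatically avoids $e$ and $f$. Then re-embed $G$ with $T$ as outer cycle and apply Lemma~\ref{tuttepath} directly to $G-F$ to get a $T$-Tutte path from $v$ to $w$ through $e=uv$; adding $f=vw$ gives the Hamiltonian cycle. No extra ``two-edge'' theorem is needed. Contrary to what you write, this is \emph{not} the main obstacle.

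The genuine gap is in Case~(i). Your ``routing switch'' count is hopeless on its own: there are only $k=O(\log n)$ consecutive $4$-cycles $py_iqy_{i+1}$, and the global bound of $c_1 n/\log_2 n$ on separating $4$-cycles does not force any of them to be non-separating, so you may have no switches at all. Even granting $O(\log^2 n)$ routing patterns, you then appeal to ``many Hamiltonian completions in the exterior'' via Lemma~\ref{4edges} and the Tutte-path lemmas --- but each of those results produces \emph{one} path or cycle, not many, so there is no mechanism in your outline to reach a linear count.

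The idea you are missing is \emph{induction on $n$}. The paper looks at the block structure of $\overline{C}-\{p,q\}$ (with $C=upwqu$). If at least $2\log_2 n$ blocks are nontrivial, each contributes two Hamiltonian paths between its cut vertices, and after contracting $\overline{C}-C$ and routing a single Hamiltonian cycle of the quotient through $e,f$, one already gets $\ge n^2/4$ Hamiltonian cycles. Otherwise there is a run of eight consecutive trivial blocks, hence a chain of degree-$4$ vertices $u_1,\dots,u_6$; contracting $u_3u_4$ yields a $4$-connected planar triangulation on $n-1$ vertices with the same separating-$4$-cycle bound, and induction gives $c_1^2(n-1)$ Hamiltonian cycles through $e,f$, all of which can be made to use $u_3u_4$. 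One then exhibits a single additional Hamiltonian cycle through $e,f$ that avoids $u_3u_4$ (here Lemma~\ref{4edges} is applied to $(G-\{u_3,u_4\})+u_2u_5$), giving $c_1^2(n-1)+1\ge c_1^2 n$. Without this inductive step your Case~(i) does not close.
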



\begin{proof} We apply induction on the number of vertices in $G$. If $n\le 1/c_1^2$ then the result follows from Lemma~\ref{4edges} that every 4-connected planar triangulation has a Hamiltonian cycle containing two given edges in a triangle. We may thus assume that $n\ge 1/c_1^2$. 

 Let $e,f$ be two edges  of $G$ such that $e,f\in E(T)$ for some triangle $T$. Without loss of generality, we may assume that $T$ is the outer cycle of $G$. By Lemma~\ref{special-set},  there are non-adjacent vertices $v,x\in V(G)$ such that $|N(v)\cap N(x)|>
\lfloor16\log_2n \rfloor$,  or $G$ has an independent set $S$ of size at least $c_1 n/\log_2 n$ satisfying (a), (b), (c),  and (d) in Lemma~\ref{special-set}.

Suppose  $G$ contains an independent set $S$ of size at least $c_1 n/\log_2 n$ and satisfying (a), (b), (c),  and (d) in Lemma~\ref{special-set}. Then $S_1:=S\setminus V(T)$ is an independent set of size at least $c_1 n/\log_2 n-1$ as $S$ has at most 1 vertex in $V(T)$. 
Let $F\subseteq E(G)$ be obtained by choosing precisely one edge incident with each vertex in $S_1$; then $G-F$ is 4-connected by Lemma~\ref{edgesetF}. Let $V(T)=\{u,v,w\}$ such that $e=uv$ and $f=vw$. By Lemma~\ref{tuttepath}, $G-F$ has a $T$-Tutte path $P_F$ between $v$ and  $w$ and containing  the edge $e$. Since $G-F$ is 4-connected, $C_F:=P_F+f$ is a Hamiltonian cycle in $G-F$. Now form a collection ${\cal C}$ by,  for each choice of $F$, taking from  $G-F$ exactly one Hamiltonian cycle $C_F$ that contains $e$ and $f$. Then all cycles in $\cal{C}$ are Hamiltonian cycles in $G$ containing $e$ and $f$. By Lemma~\ref{edgesetF}, 
$$|{\cal C}|\ge   (3/2)^{|S_1|}\geq (3/2)^{(c_1n/\log_2 n)-1}\geq c_1^2 n,$$ 
since $n\ge 1/c_1^2$. 

Thus, we may assume that  there are non-adjacent vertices $v,x\in V(G)$ such that $|N(v)\cap N(x)|>
\lfloor 16\log_2n \rfloor$. Let $C=uvwxu$ such that  $N(v)\cap N(x)\subseteq V(\overline{C})$. Note that $\overline{C}$ is a near triangulation and $T\not\subseteq \overline{C}$ (as $T$ bounds the infinite face of $G$). Since $G$ is 4-connected, $\overline{C}- \{v,x\}$ has a path between $u$ and $w$, and every block of $\overline{C}-\{v,x\}$ contains an edge of that path. Hence, the  blocks of $\overline{C} -\{v,x\}$ can be labelled as $B_1, \ldots, B_t$ and the vertices in $N(v)\cap N(x)$ can be labelled as $u_0, u_1, \ldots, u_{t-1}, u_t$ such that $V(B_i\cap B_{i+1})=\{u_i\}$ for $1\le i\le t-1$, $B_i\cap B_j=\emptyset$ when $|i-j|\ge 2$, $u_0=u\in V(B_1-u_1)$,  and $u_{t}=w\in V(B_t-u_{t-1})$.

Observe that,  for  $1\leq j\leq t$, if $|V(B_j)|\ge 3$ then we use Lemma~\ref{tuttepath} to conclude that $B_j$ has at least two Hamiltonian paths between $u_{j-1}$ and $u_j$.
\medskip

{\it Case} 1. $|\{i: |V(B_i)|\ge 3\}|\ge 2\log_2 n$.

Let $G^*$ denote the graph obtained from $G$ by contracting $\overline{C}-C$ to a single vertex $v^*$.  Then $G^*$ is a 4-connected planar triangulation  (and $v^*$ has degree 4 in $G^*$).  Hence, by applying Lemma~\ref{tuttepath}, we see that 
 $G^*$ has a Hamiltonian cycle $C^*$ such that  $e, f\in E(C^*)$. 

If $uv^*, wv^*\in E(C^*)$, then the union of $C^*-v^*$ and a Hamiltonian path between $u$ and $w$ in $\bigcup_{i=1}^t B_i$ is a Hamiltonian cycle in $G$ containing both $e$ and $f$. By the above observation,  $\bigcup_{i=1}^tB_i$ has at least $2^{2\log_2n}=n^2$ Hamiltonian paths between $u$ and $w$. Hence, the number of Hamiltonian cycles in $G$ containing both $e$ and $f$ is at least $n^2\geq c_1^2 n$.

Now assume $uv^*, vv^*\in E(C^*)$. Then the union of $C^*-v^*$ and a Hamiltonian path between $u$ and $v$ in $\overline{C}-\{w,x\}$ is a Hamiltonian cycle in $G$. By the above observation, 
$\bigcup_{i=1}^{t-1}B_i$ has at least $2^{2\log_2 n-1}=n^2/2$ Hamiltonian paths between $u$ and $u_{t-1}$.  By Lemma~\ref{tuttepath}, $G[(B_t-w)+v]$ has a Hamiltonian path between $u_{t-1}$ and $v$ (and containing $N_G(w)\cap N_G(v)$). The union of any such two paths is a Hamiltonian path between $u$ and $v$ in $\overline{C}-\{x,w\}$,  and, hence,  the number of such paths is at least $n^2/2$. Thus,  $G$ has at least $n^2/2\geq c_1^2 n$ Hamiltonian cycles containing both $e$ and $f$. 

Similarly, we can show that   $G$ has at least $n^2/2\geq c_1^2 n$ Hamiltonian cycles containing both $e$ and $f$ if $uv^*, xv^*\in E(C^*)$, or $vv^*, wv^*\in E(C^*)$, or $wv^*, xv^*\in E(C^*)$.

So assume  $vv^*, xv^*\in E(C^*)$. Then the union of $C^*-v^*$ and a Hamiltonian path between $x$ and $v$ in $\overline{C}-\{u,w\}$ is a Hamiltonian cycle in $G$. By the above observation again,  $\bigcup_{i=2}^{t-1}B_i$ has  at least $2^{2\log_2 n-2}=n^2/4$ Hamiltonian paths between $u_1$ and $u_{t-1}$. By applying  Lemma~\ref{tuttepath}, we see that $G[(B_1-u)+v]$ has a Hamiltonian path between $v$ and $u_1$ (and containing $N_G(u)\cap N_G(x))$, and $G[(B_t-w)+x]$ has a Hamiltonian path between $u_{t-1}$ and $x$ (and containing $N_G(w)\cap N_G(v)$). The union of these three  paths is a Hamiltonian path between $v$ and $x$ in $\overline{C}-\{u,w\}$, and there are at least $n^2/4$ of such paths. Hence, $G$ has at least $ n^2/4 \geq c_1^2 n$ Hamiltonian cycles containing both $e$ and $f$.

\medskip

{\it Case} 2.  $|\{i: |V(B_i)|\ge 3\}|< 2\log_2 n$.

Then there exists some integer $k$, with $0\le k\le t-7$, such that $|V(B_i)|=2$ for $i=k, k+1, \ldots, k+7$. Without loss of generality, we may assume $k=0$. 
Then  $u_i$, $1\le i\le 6$, all have degree 4 in $G$. Let $G^*$ be obtained from $G$ by contracting the edge $u_3u_4$ to a vertex, denoted by $u^*$. Then $G^*$ is a  4-connected planar triangulation with $n-1$ vertices and at most $c_1 (n-1)/{\log_2 (n-1)}$ separating 4-cycles.  
By induction, $G^*$ contains at least $c_1^2 (n-1)$ Hamiltonian cycles through both $e$ and $f$. These Hamiltonian cycles in $G^*$ can be modified inside the 4-cycle $u_2 v u_5 x u_2$ to give at least $c_1^2 (n-1)$ Hamiltonian cycles in $G$, all of which use the edge $u_3u_4$. Therefore,  $G$ has at least $c_1^2(n-1)$ Hamiltonian cycles containing  $e,f$, and the edge $u_3u_4$. Hence, to complete the proof of this lemma,  it suffices to find a Hamiltonian cycle in $G$ using $e$ and $f$ but not  the edge $u_3u_4$, as $c_1^2(n-1)+1\geq c_1^2 n$.

Consider $G':=(G^*-u^*)+u_2u_5 $, which is a 4-connected planar triangulation with $n-2$ vertices. Consider the triangles $T_1:=vu_2u_5v$ and $T_2:=xu_2u_5x$ in $G'$. By Lemma~\ref{4edges},  $G'$ has a Hamiltonian cycle $C'$ that contains both $e$ and $f$ as well as edges $e_1\in E(T_1)$ and $e_2\in E(T_2)$, such that $e,f,e_1,e_2$ are all distinct.
We show that $C'$ gives rise to  a Hamiltonian cycle in $G$ containing both  $e$ and $f$ but not the edge $u_3u_4$. By symmetry, we may assume that $e_1=vu_2$ and that  $ e_2=u_2u_5$, or $e_2=u_2x$, or $e_2=u_5x$ but $u_2u_5\notin E(C')$. 

First, suppose $ e_2=u_2u_5$. Then $u_1u_2\notin E(C')$ and, hence, $vu_1\in E(C')$ or $xu_1\in E(C')$. If $vu_1\in E(C')$, then $(C'-\{u_2,v\})\cup u_1u_2u_3vu_4u_5$ is a Hamiltonian cycle in $G$ containing $e$ and $f$ but not $u_3u_4$. If $xu_1\in E(C')$, then $(C'-\{xu_1,u_2\})\cup u_1u_2u_3x\cup vu_4u_5$ is a Hamiltonian cycle in $G$ containing $e,f$ but not $u_3u_4$.

Now suppose $e_2=u_2x$. Then $u_1u_2\notin E(C')$, hence $vu_1\in E(C')$ or $xu_1\in E(C')$.   Note that in this case we have symmetry between $v$ and $x$. Hence, by this symmetry, we may assume $vu_1\in E(C')$. Then $(C'-\{u_2,v\})\cup u_1u_2u_3vu_4x$ is a Hamiltonian cycle in $G$ containing both $e$ and $f$ but not $u_3u_4$.

Finally, suppose $e_2=u_5x$ but $u_2u_5\notin E(C')$. Then  $(C'-\{u_2v, u_5x\})\cup u_2u_3v \cup xu_4u_5$ is a Hamiltonian cycle in $G$ containing $e$ and $f$, but not $u_3u_4$.
\end{proof}

\medskip

We are ready to prove Theorem~\ref{main1}, using Lemma~\ref{2edge} as well as the idea used in its proof. 

\medskip

\begin{proof}[Proof of Theorem~\ref{main1}] Let $c_1= (108\times 16\times 541\times 301\times 2)^{-1}$. We apply induction on $n$, the number of vertices in $G$, to show that $G$ has at least $c_1^4n^2$ Hamiltonian cycles.  It is easy to check that the assertion holds when $n\le 1/ c_1^2+1$, as 
$G$ has at least two Hamiltonian cycles by Lemma~\ref{tuttepath}. So assume that $n\ge 1/c_1^2+2$. 

By Lemma~\ref{special-set}, $G$ has two non-adjacent vertices $v$ and $x$ such that $|N(v)\cap N(x)|> \lfloor 16\log_2 n \rfloor$, or $G$ contains an independent set $S$ of size at least $ c_1 n/\log_2 n$, such that $S$ satisfies (a), (b), (c),  and (d) in Lemma~\ref{special-set} and, hence, (i), (ii), (iii), and (iv) in Lemma~\ref{edgesetF}. In the latter case, it follows from  Lemma~\ref{edgesetF} that $G$ has at least $(3/2)^{|S|}\geq(3/2)^{c_1 n/\log_2 n}\geq c_1^4 n^2$ Hamiltonian cycles. So we may assume that the former occurs. 

Let $C=uvwxu$  such that $N(v)\cap N(x)\subseteq V(\overline{C})$. Note that $\overline{C}$ is a near triangulation.  Moreover, since $G$ is 4-connected, $\overline{C}-\{v,x\}$ has a path from $u$ to $w$, and every block of $\overline{C}-\{v,x\}$ contains an edge of that path. So the blocks of $\overline{C}-\{v,x\}$ can be labelled as $B_1, \dots, B_t$ and the vertices in $N(v)\cap N(x)$ can be labelled as $u_0, u_1, \dots, u_{t-1}, u_t$ such that $V(B_i\cap B_{i+1})=\{u_i\}$ for $1\le i\le t-1$, $B_i\cap B_j=\emptyset$ when $|i-j|\ge 2$, $u_0=u\in V(B_1-u_1)$,  and $u_{t}=w\in V(B_t-u_{t-1})$.

Consider $G':=G-(\overline{C}-C)$ as a near triangulation with $C$ as its outer cycle, which is 3-connected.  By Lemma~\ref{tuttepath}, there exists a $C$-Tutte path $P$ between $u$ and $w$ in $G'$ containing the edge $uv$, which is in fact a  Hamiltonian path in $G'$. To find the desired number of Hamiltonian cycles in $G$ using $P$, we need to find at least $c_1^4n^2$  Hamiltonian paths in $\overline{C}-\{v,x\}$ between $u$ and $w$. 

Observe that,  for  $1\leq j\leq t$, if $|V(B_j)|\ge 3$ then a simple application of Lemma~\ref{tuttepath} shows that $B_j$ has at least two Hamiltonian paths between $u_{j-1}$ and $u_j$.
Hence, if $|\{i: |V(B_i)|\ge 3\}|\ge 2\log_2 n$ then  $\overline{C}-\{v, x\}$ has at least $2^{2\log_2 n}=n^2$ Hamiltonian paths between $u$ and $w$, which, together with $P$, gives at least $n^2\geq c_1^4 n^2$ Hamiltonian cycles in $G$. So assume $|\{i: |V(B_i)|\ge 3\}|< 2\log_2 n$. Then there exists some integer $k$ with $0\leq k\leq t-7$, such that $|V(B_i)|=2$ for $k+1\le i\le k+7$. Since $G$ is 4-connected, $d_G(u_i)=4$  for $k+1\le i\le k+6$. Without loss of generality, we may assume $k=0$.

Let $G^*$ be obtained from $G$ by contracting $u_3u_4$ into a single vertex, say $u^*$, which has degree 4 in $G^*$. Then $G^*$ is a 4-connected planar triangulation with $n-1$ vertices and at most $c_1(n-1)/{\log_2 (n-1)}$ separating 4-cycles. By induction, $G^*$ contains $c_1^4(n-1)^2$ Hamiltonian cycles, each  using exactly two edges incident with $u^*$. It is routine to check that these cycles can be modified inside the 4-cycle $u_2vu_5xu_2$ to give at least $c_1^4(n-1)^2$ Hamiltonian cycles in $G$, all containing the edge $u_3u_4$. 

To obtain additional Hamiltonian cycles in $G$, we consider $H:=(G^*-u^*)+u_2u_5 $, which is a 4-connected planar triangulation with $n-2\ge 1/c_1^2$ vertices and at most $c_1({n-2})/{\log_2 (n-2)}$ separating 4-cycles. 

Note that $vu_2u_5v$ is a facial triangle in $H$, which can be turned into the outer cycle for a different embedding of $H$. 
So by Lemma~\ref{2edge}, $H$ has at least $c_1^2(n-2)$ Hamiltonian cycles through both $vu_2$ and $vu_5$. For each such cycle, say $D$,  we see that  $(D-v)\cup u_2u_3vu_4u_5$ is a Hamiltonian cycle in $G$ not containing the edge $u_3u_4$ (as they use $u_3vu_4$),  and, hence, is different from the Hamiltonian cycles in $G$ obtained previously by modifying those $c_1^4(n-1)^2$ Hamiltonian cycles  in $G^*$. 

 Similarly, since $xu_2u_5x$ is a facial triangle in $H$, we can find at least $c_1^2(n-2)$ new Hamiltonian cycles in $G$ containing $u_3xu_4$.  
Hence, $G$ has at least $c_1^4(n-1)^2+ 2c_1^2(n-2)\geq c_1^4n^2$ Hamiltonian cycles. 
\end{proof}

\section{Planar triangulations with minimum degree 5}
In this section, we consider 4-connected planar triangulations with minimum degree 5, another natural relaxation of 5-connected planar triangulations. Before we present a proof of Theorem~\ref{main2}, we need the following result. 

\begin{lem}\label{special-set-1}
Let $G$ be a 4-connected planar triangulation with $n$ vertices and minimum degree $\delta(G)\geq 5$. Then one of the following holds:
\begin{itemize}
    \item [\textup{(i)}]  $G$ has $2^{\Omega(n^{1/4})}$ Hamiltonian cycles.
    \item [\textup{(ii)}] $G$ has an independent set $S$ of vertices of degree at most 6, such that   $|S|=\Omega (n^{3/4})$ and 
            $S$ saturates no 4-cycle, or 5-cycle, or diamond-6-cycle.
\end{itemize}
\end{lem}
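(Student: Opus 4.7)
The plan is to apply Lemma~\ref{4-cycle} with parameter $t = \lfloor c n^{1/4} \rfloor$ for a suitable constant $c > 0$. This yields a dichotomy: either there exist non-adjacent vertices $v, x \in V(G)$ with $|N(v) \cap N(x)| > t = \Omega(n^{1/4})$, or $G$ contains an independent set $S_0$ of vertices of degree at most $6$ with $|S_0| \geq n/(108t) = \Omega(n^{3/4})$ that saturates no $4$-cycle in $G$. In the second case, I will first apply Lemma~\ref{5-cycle} to extract $S_1 \subseteq S_0$ with $|S_1| \geq |S_0|/541 = \Omega(n^{3/4})$ saturating no $5$-cycle, and then apply Lemma~\ref{diamond} to extract $S \subseteq S_1$ with $|S| \geq |S_1|/301 = \Omega(n^{3/4})$ saturating no diamond-$6$-cycle. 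This $S$ is independent, consists of vertices of degree at most $6$, and saturates no $4$-cycle, $5$-cycle, or diamond-$6$-cycle, giving conclusion (ii) directly.

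In the first case, I aim to produce $2^{\Omega(n^{1/4})}$ Hamiltonian cycles in $G$, giving conclusion (i). Following the setup in the proof of Lemma~\ref{2edge}, let $C = uvwxu$ be the $4$-cycle chosen so that $N(v) \cap N(x) \subseteq V(\overline{C})$, and label the blocks of $\overline{C} - \{v, x\}$ as $B_1, \ldots, B_t$ with cut vertices $u_0 = u, u_1, \ldots, u_{t-1}, u_t = w$ drawn from $N(v) \cap N(x)$. The new ingredient, crucially leveraging $\delta(G) \geq 5$, is the observation that two consecutive blocks cannot both be single edges: if $|V(B_i)| = |V(B_{i+1})| = 2$ for some $1 \leq i \leq t-1$, then by planarity the internal common neighbor $u_i$ has $N_G(u_i) = \{u_{i-1}, u_{i+1}, v, x\}$, so $d_G(u_i) = 4$, contradicting the minimum degree hypothesis. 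Hence at least $\lfloor t/2 \rfloor = \Omega(n^{1/4})$ of the blocks $B_i$ satisfy $|V(B_i)| \geq 3$, and each such $B_i$ admits at least two Hamiltonian paths between $u_{i-1}$ and $u_i$ by Lemma~\ref{tuttepath} (exactly as used in Lemma~\ref{2edge}). Concatenating these path choices across the blocks gives $2^{\lfloor t/2 \rfloor} = 2^{\Omega(n^{1/4})}$ distinct Hamiltonian paths in $\overline{C} - \{v, x\}$ from $u$ to $w$; joining each with a single Hamiltonian path $P$ between $u$ and $w$ in $G - (\overline{C} - C)$ through the edge $uv$, produced by Lemma~\ref{tuttepath}, yields $2^{\Omega(n^{1/4})}$ distinct Hamiltonian cycles of $G$.

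The main obstacle is carrying out the first case cleanly: verifying that the block-by-block construction actually produces the claimed number of distinct Hamiltonian paths in $\overline{C} - \{v, x\}$, and that stitching them to the exterior Tutte path gives genuine Hamiltonian cycles of $G$. These steps follow the blueprint of Case 1 of Lemma~\ref{2edge}'s proof, with the only novel element being the use of $\delta(G) \geq 5$ to force $\Omega(n^{1/4})$ large blocks, which is exactly what upgrades the $n^2$ bound in Lemma~\ref{2edge} to $2^{\Omega(n^{1/4})}$ Hamiltonian cycles.
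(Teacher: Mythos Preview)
Your proposal is correct and follows essentially the same argument as the paper's proof: apply Lemma~\ref{4-cycle} with $t=\lfloor cn^{1/4}\rfloor$, handle the large-independent-set branch via Lemmas~\ref{5-cycle} and~\ref{diamond}, and in the common-neighborhood branch use $\delta(G)\ge 5$ to force at least half of the blocks $B_i$ of $\overline{C}-\{v,x\}$ to be nontrivial, yielding $2^{\Omega(n^{1/4})}$ Hamiltonian $u$--$w$ paths that combine with one exterior Tutte path to give the Hamiltonian cycles. The only cosmetic difference is that you point to the setup of Lemma~\ref{2edge} rather than restating the block decomposition.
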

\begin{proof} Let $c>0$ be an arbitrary constant. By Lemma~\ref{4-cycle} (with $t=\lfloor c{n^{1/4}}\rfloor$),   there exist non-adjacent vertices $v,x$ in $G$ such that $|N(v)\cap N(x)|>\lfloor  cn^{1/4} \rfloor$, or  $G$ has an independent set $S_1$ of vertices of degree at most 6, such that $|S_1|\geq n^{3/4}/(108c)$ and $S_1$ saturates no 4-cycle in $G$. If the latter holds then, by Lemmas~\ref{5-cycle} and \ref{diamond}, there exists a subset $S$ of $S_1$, such that $|S|\geq |S_1|/(541\times 301) \geq c' n^{3/4}$, where 
$c'=(541\times 301\times 108c)^{-1}$, 
 and $S$ saturates no 4-cycle, or 5-cycle, or diamond-6-cycle in $G$; so (ii) holds. Thus we may assume that the former occurs.

Let $C=uvwxu$ such that  $N(v)\cap N(x)\subseteq V(\overline{C})$. Since $G$ is 4-connected, $\overline{C}-\{v,x\}$ has a path, say $Q$, between $u$ and $w$. Let $N(v)\cap N(x)=\{u_0, u_1, \ldots, u_k\}$, with $k\ge \lfloor c n^{1/4} \rfloor$, such that $u_0=u, u_k=w$, and 
$u_0, u_1, \ldots, u_k$ occur on $Q$ in order.  Since $G$ is a 4-connected planar triangulation, the blocks of $\overline{C}-\{v,x\}$ can be labelled as $B_1, \ldots,B_k$ such that $u_{i-1},u_i\in V(B_i)$ for $i=1, \ldots, k$, and $B_i\cap B_j=\emptyset$ for $i,j\in \{1, 2,\dots, k\}$ with $|i-j|\geq 2$. 

For each $B_i$ with $|V(B_i)|\ge 3$, $u_{i-1}u_i\notin E(G)$ as $G$ is 4-connected. Thus, if $|V(B_i)|\ge 3$ then $B_i$ is a near triangulation that has no separating triangles and,  by applying Lemma~\ref{tuttepath}, we can find two Hamiltonian paths between $u_{i-1}$ and $u_i$ in $B_i$. 
Since $\delta(G)\ge 5$, we see that $|V(B_i)|\ge 3$ or $|V(B_{i+1})|\ge 3$, for  $i=1, \ldots, k-1$. Thus, $|\{i: |V(B_i)|\ge 3\}|\ge (\lfloor cn^{1/4} \rfloor -1)/2$. It is easy to see that $\overline{C}-\{v,x\}$ has at least $2^{ (\lfloor cn^{1/4} \rfloor -1) /2}$ Hamiltonian paths between $u$ and $w$. 

We view $C$ as the outer cycle of a different embedding of $H: = G- (\overline{C}-C)$. By Lemma~\ref{tuttepath}, $H$ has a $C$-Tutte path $P$ between $u$ and $w$, which is in fact a Hamiltonian path in $H$. Now $P$ and any Hamiltonian path in $\overline{C}-\{v,x\}$ betwwen $u$ and $w$ form a Hamiltonian cycle in $G$. So $G$ has at least $2^{(\lfloor c n^{1/4} \rfloor-1) /2}$ Hamiltonian cycles, and (i) holds.
\end{proof}

Recall the definition of a diamond-4-cycle in Figure~\ref{diamond cycle}, and recall that
the two vertices contained in two triangles in a diamond-4-cycle are its {\it crucial} vertices. In the proof of Theorem~\ref{main2}, we will need to consider
the subgraph of a planar triangulation that lie between two diamond-4-cycles and use the following result on Hamiltonian paths in those subgraphs.

\begin{lem}\label{diamond-4-cycle}
Let $G$ be a near triangulation with outer cycle $C:=uvwxu$ and with no separating triangles, and let 
$z\in V(G)\setminus  V(C)$ have degree 4 in $G$, such that $G[N(z)]$ is contained in a diamond-4-cycle $D'$ in $G-z$, and 
all vertices in $V(G)\setminus (V(C)\cup \{z\}\cup N(z))$ have degree at least 5 in $G$. Suppose  
\begin{itemize}
\item $V(D')\cap V(C)=\emptyset$, or 
\item $V(D')\cap V(C)$ consists of exactly two vertices that are non-adjacent in  $D'\cup C$, and one of these vertices  is a crucial vertex of $D'$, or 
\item $V(D')\cap V(C)$ consists of exactly two vertices that are adjacent in both $D'$ and $C$, and none of these vertices is a crucial vertex of $D'$. 
\end{itemize}
Then one of the following holds:
\begin{itemize}
\item [\textup{(i)}] For any distinct $a,b\in V(C)$, $G-(V(C)\setminus \{a,b\})$ has at least two Hamiltonian paths between $a$ and $b$. 

\item [\textup{(ii)}] There exist distinct $a,b\in V(C)$ such that  $G-(V(C)\setminus \{a,b\})$ has a unique Hamiltonian path, say $P$,  between $a$ and $b$; but for any distinct $c,d\in  V(C)$ with $\{c,d\}\ne \{a,b\}$, $G-(V(C)\setminus \{c,d\})$ has at least two Hamiltonian paths between $c$ and $d$ and avoiding an edge of $P$ incident with $z$.
\end{itemize}
\end{lem}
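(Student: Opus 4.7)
The plan is to argue by case analysis on the placement of $D'$ relative to $C$ (the three bullets of the hypothesis) and on the endpoint pair $\{a,b\}\subseteq V(C)$, using the Tutte-path machinery of Section~2. Since $z$ has degree $4$ in the near triangulation $G$, its four neighbors form a $4$-cycle (the planar link of $z$) embedded inside $D'$, and any Hamiltonian path of $H_{a,b}:=G-(V(C)\setminus\{a,b\})$ must visit $z$ via two consecutive edges in the cyclic order at $z$; the diamond $D'$ further restricts which of these passages can actually occur. For most pairs $\{a,b\}$, I would apply Lemma~\ref{uv-path} (when $a,b$ are adjacent on $C$) or Lemma~\ref{uw-path} (when they are opposite) to $G$; the outerplanar/path alternative of those lemmas is ruled out by the degree-$5$ hypothesis on interior vertices outside $\{z\}\cup N(z)$, since every interior vertex of an outerplanar near triangulation has degree at most $4$.

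In placement case (a), where $V(D')\cap V(C)=\emptyset$, the diamond is disjoint from the removed vertices $V(C)\setminus\{a,b\}$, so the above argument produces two Hamiltonian paths for every pair $\{a,b\}$, yielding conclusion (i). In the remaining placement cases (b) and (c), the same argument handles every pair $\{a,b\}$ except possibly one, for which the attachments of $D'$ to $C$ force the Hamiltonian path through $z$ to use one specific pair of incident edges. For that exceptional pair I would trace the unique Hamiltonian path $P$ explicitly around the two triangles of $D'$, and identify the forced edge $zy\in E(P)$ that causes uniqueness.

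For every remaining pair $\{c,d\}$, I then need two Hamiltonian paths in $H_{c,d}$ that both avoid $zy$. I would obtain these by applying Lemma~\ref{tuttepath-1} after a local re-embedding of $H_{c,d}$ that places an edge of $z$ other than $zy$ on the outer face; Lemma~\ref{tuttepath-1} then forces the resulting Tutte path to pass through that edge and hence to avoid $zy$, while the no-separating-triangle hypothesis combined with the degree-$5$ lower bound ensures that the Tutte path is in fact Hamiltonian. Running the construction with each of the two remaining edges at $z$ yields the two required distinct Hamiltonian paths.

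The main obstacle is the bookkeeping across cases (b) and (c) and the six endpoint pairs: one must verify uniformly that the re-embedded Tutte path is Hamiltonian and genuinely distinct from its mirror construction, paying particular attention to whether the vertex of $V(C)\cap V(D')$ being removed is a crucial vertex of $D'$ or not, because the crucial/non-crucial distinction controls exactly which edge of $z$ becomes the forced edge $zy$. This local analysis of the diamond-$4$-cycle, rather than the Tutte-path calls themselves, is what drives the dichotomy between conclusions (i) and (ii).
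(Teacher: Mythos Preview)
Your overall architecture---case split on the three bullets, then on the pair $\{a,b\}$, invoking Lemma~\ref{uw-path} or Lemma~\ref{uv-path} and ruling out the outerplanar alternative---matches the paper's proof, and your treatment of the first bullet (disjoint case) is essentially identical to the paper's Case~1. In the second bullet the paper in fact shows that conclusion~(i) always holds, so no exceptional pair arises; your ``possibly one'' phrasing is harmless there.

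There is, however, a genuine gap in your plan for conclusion~(ii). Fix the notation of Case~3 in the paper: $N(z)=\{u',v',w',x'\}$, the crucial vertices of $D'$ are $u'$ and $y'$, and $V(D')\cap V(C)=\{v',w'\}=\{v,w\}$. The exceptional pair is $\{a,b\}=\{u,x\}$, and the unique path $P$ uses $u'zx'$. Now take $\{c,d\}=\{v,w\}$, so you remove $u$ and $x$. Neither $u$ nor $x$ lies in $N(z)$, so in $H_{c,d}=G-\{u,x\}$ the vertex $z$ still has its full link $u'v'w'x'$ surrounding it; no planar re-embedding can place an edge of $z$ on the outer face, and your intended application of Lemma~\ref{tuttepath-1} cannot get started. (The same issue also makes the ``degree-$5$ interior vertex'' heuristic for excluding outerplanarity too coarse: in the exceptional pair $\{u,x\}$ the graph $G-\{v,w\}$ genuinely \emph{is} outerplanar, because the only interior vertices of low degree, namely $z$ and $u'$, are exactly the ones whose links were broken.)

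The paper handles conclusion~(ii) differently. Writing $H=G-\{v,w\}$ for the outerplanar exceptional graph, it proves a Claim~2: $H-u$ has two Hamiltonian $z$--$x$ paths $P_1,P_2$ (obtained via Lemma~\ref{tuttepath} with prescribed first edge $zu'$ or $zx'$), and symmetrically $H-x$ has two Hamiltonian $z$--$u$ paths $Q_1,Q_2$. These are paths \emph{starting at $z$} in a graph with three vertices of $C$ deleted, and one then appends a single edge $zv$ or $zw$ (both present since $v=v'$, $w=w'\in N(z)$) to reach the required $c$--$d$ path in $G-(V(C)\setminus\{c,d\})$ for each of the pairs $\{u,w\},\{v,x\},\{u,v\},\{w,x\}$. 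For the remaining pair $\{c,d\}=\{v,w\}$ the paper exploits the explicit outerplanar structure of $H$ directly to build two $v$--$w$ Hamiltonian paths through $zv$ and $zw$ respectively. In each case the constructed path passes through $zv'$ or $zw'$ and hence avoids at least one edge of $u'zx'\subseteq P$, which is exactly what (ii) demands. Replacing your re-embedding step with this ``path from $z$, then append one edge back to $C$'' device closes the gap.
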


\begin{proof}
Let $C':=G[N(z)]=u'v'w'x'u'$  and let $y'\in V(G)$ such that $y'u',y'v',y'x'\in E(D')$. Then $y'$ and $u'$ are crucial vertices of $D'$. Without loss of generality assume that $u', v', w',  x'$ occur on $C'$ in clockwise order, and $u, v, w, x$ occur on $C$ in clockwise order.
\medskip

{\it Case} 1. $V(D')\cap V(C)=\emptyset$. 

 Then $z$ is not incident with the infinite face of $G-V(C)$. So for any distinct  $a,b\in V(C)$, $G-(V(C)\setminus \{a,b\})$ cannot be an outer planar graph. Thus, by Lemma~\ref{uw-path} or Lemma~\ref{uv-path}, $G-(V(C)\setminus \{a,b\})$ has  at least two Hamiltonian paths between $a$ and $b$. So (i) holds. 

\medskip

{\it Case} 2.  $V(D')\cap V(C)$ consists of exactly two vertices that are non-adjacent in  $D'\cup C$, and one of these vertices  is a crucial vertex of $D'$.

 Then $V(D')\cap V(C)=\{y', w'\}$, and $V(D')\cap V(C)=\{u,w\}$ or $V(D')\cap V(C)=\{v,x\}$. Without loss of generality, we may assume $u=y'$ and $w=w'$. 

For distinct $a,b\in V(C)$ with $ab\notin E(C)$,  $\{a,b\}=\{u,w\}$ or $\{a,b\}=\{v,x\}$. Since $d_G(v')\geq 5$ and $d_G(x')\geq 5$, $G-(V(C)\setminus \{a,b\})$ cannot be a path. Therefore, by Lemma~\ref{uw-path}, $G-(V(C)\setminus \{a,b\})$ has at least two Hamiltonian paths between $a$ and $b$. 

Now we consider $a,b\in V(C)$ with $ab\in E(C)$. Then, $u'$ or $z$ is not incident with the infinite face of $G-(V(C)\setminus \{a,b\})$; so $G-(V(C)\setminus \{a,b\})$ cannot be an outer planer graph. Hence, by Lemma~\ref{uv-path}, $G-(V(C)\setminus \{a,b\})$ has at least two Hamiltonian paths between $a$ and $b$. 

\medskip

{\it Case} 3.   $V(D')\cap V(C)$ consists of exactly two vertices that are adjacent in both $D'$ and $C$, and none of these vertices is a crucial vertex of $D'$. 

 Then $V(D')\cap V(C)=\{v',w'\}$ or  $V(D')\cap V(C) =\{x',w'\}$. By the symmetry among the edges in $C$ and between the two orientations of $C$, we may assume  $V(D')\cap V(C)=\{v,w\}$.
Further by the symmetry between $v'$ and $x'$, we may assume that $v=v'$ and $w=w'$. 

\begin{itemize}
   \item []Claim 1. For any distinct $a,b\in V(C)$ with $\{a,b\}\ne \{u,x\}$,  $G-(V(C)\setminus \{a,b\})$ has at least   two Hamiltonian paths between $a$ and $b$.
\end{itemize}
If  $\{a,b\}=\{u,w\}$ or $\{a,b\}=\{v,x\}$ then the claim follows from Lemma~\ref{uw-path} (as $d_G(y')\ge 5$). 
If $\{a,b\}=\{v,w\}$ or $\{a,b\}=\{v,u\}$ then $u'$ is not incident with the infinite face of $G-(V(C)\setminus\{a,b\})$; 
so the claim follows from Lemma~\ref{uv-path}. 

Now suppose $\{a,b\}=\{w,x\}$. Suppose $G-(V(C)\setminus \{w,x\})$, i.e., $G-\{u,v\}$, has exactly one Hamiltonian path between $w$ and $x$. Then by Lemma~\ref{uv-path}, $G-\{u,v\}$ is an outer planar graph. 
Now $x'$ is incident with the infinite face of $G-\{u,v\}$; so $x'u,x'x\in E(G)$. Also, $y'$ is incident with the infinite face of $G-\{u,v\}$; so  $y'u\in E(G)$. Then, since $d_G(y')\ge 5$, $ux'y'u$ or $uy'vu$ is a separating triangle in $G$, a contradiction. Thus, we have Claim 1. 

\medskip

Therefore, if $G-(V(C)\setminus \{u,x\})$, i.e., $G-\{v,w\}$, has two Hamiltonian paths between $u$ to $x$, then (i) follows from Claim 1. 
Hence, we may assume that 
$H:=G-\{v,w\}$ has at most one Hamiltonian paths between  $u$ and $x$. Then by Lemma~\ref{uv-path},  $H$ is an outer planar graph and the unique Hamiltonian path $P$ in $H$ between $u$ and $x$ contains $u'zx'$, since $z\in N_G(v)\cap N_G(w)$, $V(uPz)\subseteq N_G(v)$ and $V(zPx)\subseteq N_G(w)$. 

Note that $d_{G}(u')=d_{G}(z)=4$ and $u'x'zu'$ is a triangle in $H-\{u,x\}=G-C$. 

\begin{itemize}
    \item [] Claim 2. $H-u$ has two Hamiltonian paths $P_1,P_2$ between $z$ and $x$, and $H-x$ has two Hamiltonian paths $Q_1,Q_2$ between $z$ and $u$. 
\end{itemize}
We only consider $H-u$, as the case for $H-x$ can be taken care of by the same argument.  Suppose $H-u$ is 2-connected. Then let $F$ denote the outer cycle of $H-u$. By Lemma~\ref{tuttepath}, $H-u$ contains $F$-Tutte paths $P_1,P_2$ between $z$ and $x$ such that $zu'\in E(P_1)$ and 
$zx'\in E(P_2)$. We claim that $P_1,P_2$ are in fact Hamiltonian paths in $H-u$. For suppose otherwise, and let $B$ be a $P_i$-bridge of $H-u$ with $B-P_i\ne \emptyset$ for some $i\in \{1,2\}$. Recall that $P-u\subseteq F$,  $V(uPz)\subseteq N_G(v)$, and $V(zPx) \subseteq N_G(w)$. To avoid separating triangles in $G$, the unique vertex in $N_P(u)$, say $u^*$, is in $B-P_i$. Hence, we see that $d_G(u^*)=4$, a contradiction.

Now assume that $H-u$ is not 2-connected. Then since $u',z$ are the only vertices in $V(G)\setminus V(C)$ with degree 4 in $G$, $x$ has a unique neighbor in $H-u$, say $x^*$, and $(H-u)-x$ is 2-connected. Moreover, $x^*\ne x'$ as, otherwise, $y'$ would have degree 4 in $G$. 
Let $F$ denote the outer cycle of $(H-u)-x$. By Lemma~\ref{tuttepath}, $(H-u)-x$ contains $F$-Tutte paths $R_1,R_2$ between $z$ and $x^*$ such that $zu'\in E(R_1)$ and 
$zx'\in E(R_2)$. Note that $P-\{u,x\}\subseteq F$, $V(uPz) \subseteq N_G(v)$, and $V(zPx) \subseteq N_G(w)$. As in the previous paragraph,  we see that
$R_1,R_2$ are Hamiltonian paths in $(H-u)-x$, since $G$ has no separating triangles. So
 $R_1\cup x^*x ,R_2\cup x^*x$ are Hamiltonian paths between $z$ and $x$ in $H-u$. This completes the proof of Claim 2. 

\medskip
We now show that (ii) holds, with $\{a,b\}=\{u,x\}$. Let $c, d\in V(C)$ be distinct such that $\{c,d\}\neq \{u,x\}$.
If $\{c,d\}=\{u,w\}$ then  $Q_1\cup zw,Q_2\cup zw$ are distinct Hamiltonian paths in $G-\{v,x\}=G-(V(C)\setminus \{c,d\})$  between $u$ and $w$ and not containing $u'zx'$.   If $\{c,d\}=\{v,x\}$ then $P_1\cup zv,P_2\cup zv$ are distinct Hamiltonian paths in $G-\{u,w\}=G-(V(C)\setminus \{c,d\})$  between $v$ and $x$  and not containing $u'zx'$. If $\{c,d\}=\{u,v\}$ then  $Q_1\cup zv, Q_2\cup zv$ are distinct Hamiltonian paths in $G-\{w,x\}=G-(V(C)\setminus \{c,d\})$ between $u$ and $v$ and not containing  $u'zx'$. If $\{c,d\}=\{w,x\}$ then $P_1\cup zw, P_2\cup zw$ are distinct Hamiltonian paths  in $G-\{u,v\}=G-(V(C)\setminus \{a,b\})$ between $w$ and $x$ and not containing $u'zx'$.  

Thus to prove that (ii) holds, it remains to consider $\{c,d\}=\{v,w\}$. Then $G-(V(C)\setminus \{c,d\})=G-\{u,x\}$. Using the fact that $H=G-\{v,w\}$ is outer planer with $P$ as its unique Hamiltonian path between $u$ and $x$, we see that  $G-\{u,x,w\}$ has a Hamiltonian path $R_1$ between $z$ and  $v$, and $G-\{u,x,v\}$ has a Hamiltonian path $R_2$ between $z$ and $w$. Hence, $R_1\cup zw$ and $R_2\cup zv$ are distinct Hamiltonian paths between $v$ and $w$ in $G-\{u,x\}$ and not containing $u'zx'$. 
\end{proof}

\medskip

\begin{proof}[Proof of Theorem~\ref{main2}]

By Lemma~\ref{special-set-1}, we may assume that, for some constant $c>0$, 
\begin{itemize}
\item [(1)] $G$ has an independent set $S$ of vertices of degree 5 or 6, such that $|S|\geq c n^{3/4}$ and $S$ saturates no 4-cycle, or 5-cycle, or diamond-6-cycle in $G$.  
\end{itemize}

Let $S^*$ consist of all vertices in $S$ that are each adjacent to exactly 3 vertices of a separating 4-cycle in $G$. We may assume that 
\begin{itemize}

\item [(2)] $|S^*|\ge c n^{3/4} /2$. 
\end{itemize}
For, otherwise, $|S\setminus S^*|\ge c n^{3/4}/2$. Then, since $\delta(G)\ge 5$,  we can apply Lemma~\ref{edgesetF} to $G, S\setminus S^*$ and conclude that $G$ has at least $(3/2)^{|S\setminus S^*|}\geq (3/2)^{c n^{3/4}/2}>2^{cn^{1/4}/2}$ Hamiltonian cycles, and the assertion of Theorem~\ref{main2} holds. 
So we may assume (2).

\medskip

 For convenience, we use $\overline{D}$, for any diamond-4-cycle $D$, to denote the subgraph of $G$ consisting of vertices and edges of $G$ in the closed disc bounded by the outer cycle of $D$. 

We now define a collection ${\cal D}$ of diamond-4-cycles that are associated with vertices in $S^*$, one for each vertex in $S^*$. 
 For each  $v\in S^*$, $v$ is adjacent to three vertices of some separating  4-cycle, say $C_v$, and, since $\delta(G)\geq 5$, $D_v: =G[C_v+v]$ is a diamond-4-cycle in $G$. Note that $v$ is a crucial vertex of $D_v$. For every $v\in S^*$, we choose $D_v$ so that $\overline{D_v}$ is maximal.
 Now let ${\cal D}=\{D_v: v\in S^*\}$. 
Clearly, if $u\ne v \in S^*$ then $D_u\ne D_v$ (as $S^*$ does not saturate any 4-cycle in $G$). Thus $|{\cal D}|=|S^*|\geq c n^{3/4}/2$.  We now prove the following claim.

\begin{itemize}
\item [(3)] For distinct $u,u'\in S^*$, $|V(D_u)\cap V(D_{u'})|\le 2$. Moreover, if 
$V(D_u)\cap V(D_{u'})$ consists of two vertices, say $a$ and $b$,  then either $a b\in E(D_u)\cap E(D_{u'})$ and neither $a$ nor $b$ is a crucial vertex of $D_u$ or $D_{u'}$, or 
$ab \notin  E(D_u)\cup E(D_{u'})$ and $D_u, D_{u'}$ each have precisely one crucial vertex in $\{a,b\}$.
\end{itemize}
To prove (3), let $u, u' \in S^*$ be distinct. Then $u'\notin V(D_u)$ as otherwise $u,u'$ are contained in a 4-cycle in $D_u$, a contradiction as $S^*$ saturates no 4-cycle in $G$. Similarly, $u\notin V(D_{u'})$.  Moreover,  $|N(u')\cap V(D_u)|\le 1$; for otherwise $u,u'$ are contained in a 4-cycle or 5-cycle in $G[D_u+u']$, a contradiction as $S^*$ saturates no 4-cycle or 5-cycle in $G$. Likewise, $|N(u)\cap V(D_{u'})|\le 1$. Therefore, $|V(D_u)\cap V(D_{u'})|\le 2$. 

Now suppose $V(D_u)\cap V(D_{u'})=\{a,b\}$ with $a\ne b$. If $ab\in E(D_u)\setminus E(D_{u'})$ then $ab$ and two edges in $D_u'$ form a separating triangle in $G$, 
a contradiction. So $ab\notin E(D_u)\setminus E(D_{u'})$. Similarly, $ab\notin E(D_{u'})\setminus E(D_{u})$. 

If $ab\in E(D_u)\cap E(D_{u'})$ then neither $a$ nor $b$ is a crucial vertex in $D_u$ or $D_{u'}$, as otherwise $u$ and $u'$ would be contained in 4-cycle or 5-cycle in $G$. 
If $ab\notin E(D_u)\cup E(D_{u'})$ then $D_u$ and $D_{u'}$ each have exactly one crucial vertex in  $\{a,b\}$, to avoid a 4-cycle or 5-cycle containing $\{u,u'\}$. 
This completes the proof of (3).


\medskip

By (3), for any $D_1,D_2\in {\cal D}$, either $\overline{D_1}-D_1$ and $\overline{D_2}-D_2$ are disjoint, or $\overline{D_1}$ contains $\overline{D_2}$ or vice versa. We may assume that 
\begin{itemize}
    \item [(4)] there exists an integer $t\ge n^{1/2}$ and $D_1, D_2,\dots , D_t\in {\cal D}$, such that $\overline{D_1}\supseteq \overline{D_2 }\supseteq \dots \supseteq \overline{D_t}$.
\end{itemize}
For, otherwise, there exists some integer $k\ge c n^{1/4} /2$ and diamond-4-cycles $D_1, \ldots, D_k\in \cal{D}$ such that $\overline{D_i}-D_i$ and $\overline{D_j}-D_j$ are 
disjoint whenever $1\leq i\ne j \leq k$. Let $G^*$ be obtained from $G$ by contracting $\overline{D_i}-D_i$ to a new vertex $v_i$, for all $1\le i\le k$. 
Then $G^*$ is a 4-connected planar triangulation and, hence, has a Hamiltonian cycle, say $C$. 

Let $a_i,b_i\in N_{G^*}(v_i)$ such that $a_iv_ib_i\subseteq C$ for $1\le i\le k$. Let $C_i$ denote the  4-cycle in $G$, such that $\overline{D_i}-D_i$ is the interior of $C_i$. Then, $C_i\subseteq D_i$ and $\overline {C_i}-C_i=\overline {D_i}-D_i$.
Since all vertices in $\overline{C_i}-C_i$ have  degree at least 5, $\overline{C_i}-(V(C_i)\setminus \{a_i,b_i\})$ cannot be outer planar. 
So by Lemma~\ref{uw-path} or Lemma~\ref{uv-path},  $\overline{C_i}-(V(C_i)\setminus \{a_i,b_i\})$ has at least two Hamiltonian paths between $a_i$ and $b_i$. 

We can form a Hamiltonian cycle in $G$ by taking the union of $C-\{v_i: 1\le i\le k\}$ and by selecting one Hamiltonian path  between $a_i$ and $b_i$ in each $\overline{C_i}-(V(C_i)\setminus \{a_i,b_i\})$ for $1\leq i\leq k$. Thus, $G$ has at least $2^k \ge 2^{cn^{1/4}/2}$ Hamiltonian cycles, completing the proof of (4). 

\medskip
For each $1\leq j\leq t$, let $u_j$ and $y_j$ be the crucial vertices of $D_j$ and let $v_j,w_j,x_j$ be the other vertices of $D_j$, so that $y_j v_j w_j x_j y_j$ is the outer cycle of $D_j$ and $y_j, v_j, x_j\in N_G(u_j)$. Then $u_j\in S^*$ or $y_j\in S^*$. Let $C_j:= u_j v_j w_j x_j u_j$. Then by (4),  $\overline{C_1}\supseteq \overline{C_2}\supseteq\dots \supseteq \overline{C_t}$. For $j=1, \ldots, t-1$, 
let $G_j$ denote the graph obtained from $\overline{C_j}$ by contracting $\overline{C_{j+1}}-C_{j+1}$ to a new vertex, denoted by $z_{j+1}$. Note that $G_j$ is a near triangulation with outer cycle $C_j$ and that $G_j$ contains the diamond-4-cycle $D_{j+1}$. For convenience, let $G_t:=\overline{C_t}$. We claim that,

\begin{itemize}
\item [(5)] for any $j\in \{1, \ldots, t-1\}$, if  $|V(C_j)\cap V(D_{j+1})|=1$ then, for any distinct $a,b\in V(C_j)$, $G_j-(V(C_j)\setminus \{a,b\})$ has two Hamiltonian paths between $a$ and $b$. 
\end{itemize}
For, suppose $V(C_j)\cap V(D_{j+1})=\{v\}$. If $v=y_{j+1}$ then $z_{j+1}$ is not incident with the infinite face of $G_j-C_j$; so $G_j-C_j$ is not outer planar and, by Lemma~\ref{uw-path} or Lemma~\ref{uv-path}, for any distinct $a,b\in V(C_j)$, $G_j-(V(C_j)\setminus \{a,b\})$ has two Hamiltonian paths between $a$ and $b$.
If $v=w_{j+1}$ then $u_{j+1}$ is not incident with the infinite face of $G_j-C_j$; so again by Lemma~\ref{uw-path} or Lemma~\ref{uv-path},  for any distinct $a,b\in V(C_j)$, $G_j-(V(C_j)\setminus \{a,b\})$ has two Hamiltonian paths between $a$ and $b$.
Thus, we may assume without loss of generality that  $v=v_j=v_{j+1}$.

For $\{a,b\}=\{u_j, w_j\}$ or $\{a,b\}=\{v_j, x_j\}$,  since $d_{G_j}(y_{j+1})=d_G(y_{j+1})\geq 5$, it follows from Lemma~\ref{uw-path} that  $G_j-(V(C_j)\setminus \{a,b\})$ has two Hamiltonian paths between $a$ to $b$. For $\{a,b\}=\{v_j, u_j\}$ or $\{a,b\}=\{v_j, w_j\}$, since $z_{j+1}$ is not incident with the infinite face of $G_j-(V(C_j)\setminus \{a,b\})$, $G_j-(V(C_j)\setminus \{a,b\})$ has two Hamiltonian paths between $a$ and $b$ by Lemma~\ref{uv-path}. For $\{a,b\}=\{w_j, x_j\}$, since $d_{G_j}(y_{j+1})\geq 5$, $x_{j+1}$ is not incident with the infinite face of $G_j-(V(C_j)\setminus \{a,b\})$; it follows from Lemma~\ref{uv-path} that $G_j-(V(C_j)\setminus \{a,b\})$ has two Hamiltonian paths between $a$ and $b$. 

Finally, consider $\{a,b\}=\{u_j, x_j\}$. Suppose $G_j-(V(C_j)\setminus \{a,b\})=G_j-\{v_j,w_j\}$ has a unique Hamiltonian path between $a$ and $b$. Then $G_j-\{v_j,w_j\}$  is outer planar; so $x_{j+1}$ is incident with the infinite face of $G_j-\{v_j,w_j\}$ and, hence, $x_{j+1}w_j\in E(G)$. 
 Therefore, $D_{j+1}':=(D_{j+1}-w_{j+1}) \cup  v_{j+1}w_jx_{j+1}$ is a diamond-4-cycle containing $\{u_{j+1},y_{j+1}\}$, and 
$\overline{D_{j+1}'}$ properly contains $\overline{D_{j+1}}$. Thus $D'_{j+1}$ contradicts the choice  of $D_{j+1}$. So $G_j-(V(C_j)\setminus \{a,b\})$ has at least two Hamiltonian paths between $a$ and $b$.  This completes the proof of (5). 

\medskip

We may assume that  
\begin{itemize}
    \item [(6)] there exists some integer $k$ with  $1\leq k\leq t-n^{1/4}$ and there exist distinct $a_j,b_j\in V(C_j)$ for $k \leq j\leq k+\lfloor n^{1/4} \rfloor$, such that, for $k\leq j\leq k+ \lfloor n^{1/4} \rfloor -1$, $G_{j}-(V(C_{j})\setminus \{a_j,b_j\})$ has a unique Hamiltonian path $P_j$ between $a_j$ and $b_j$,  and $P_j$ contains $a_{j+1}z_{j+1}b_{j+1}$.
\end{itemize}
For $j=1, \ldots, t$, let $H_j$ denote the graph obtained from $G$ by contracting $\overline{C_j}-C_j$ to a new vertex $z_j$. Let $H_{t+1}:=G$. Note that $H_j$, $1\leq j\leq t+1$, are 4-connected planar triangulations.

We see that $H_1$ has a Hamiltonian cycle, say $F_1$, and let $a_1,b_1\in V(C_1)$ such that $a_1z_1b_1\subseteq F_1$. We now define a rooted tree $T$ whose root $r$ represents $F_1$, and whose leaves are Hamiltonian cycles in $G$. Recall the graphs $G_j$, $1\leq j \leq t$.

For each Hamiltonian path $P_1$ in $G_1-(V(C_1)\setminus \{a_1,b_1\})$ between $a_1$ and $b_1$, $F_2:=(F_1-z_1)\cup P_1$ is a Hamiltonian cycle in $H_2$;  we add a neighbor to $r$ in $T$ to represent $F_2$.  This defines all vertices of $T$ at distance 1 from the root $r$.   Now, suppose we have defined all vertices of $T$ at distance $s$ from $r$, for some $s$ with $1\le s\le t-1$, each of which represents a Hamiltonian cycle in $H_{s+1}$.  To define the 
vertices of $T$ at distance $s+1$ from $r$, we let $v$ be an arbitrary vertex in $T$ that is at distance $s$ from $r$. Then $v$ represents a Hamiltonian cycle $F_{s+1}$ in $H_{s+1}$. Let $a_{s+1},b_{s+1}\in V(F_{s+1})$ such that $a_{s+1}z_{s+1}b_{s+1}\subseteq F_{s+1}$. For each  Hamiltonian path $P_{s+1}$ in $G_{s+1}-(V(C_{s+1})\setminus \{a_{s+1},b_{s+1}\})$ between $a_{s+1}$ and $b_{s+1}$, $F_{s+2}:=(F_{s+1}-z_{s+1})\cup P_{s+1}$ is a Hamiltonian cycle in $H_{s+2}$;  we add a neighbor to $v$ in $T$ to represent $F_{s+2}$. This process continues until $s=t-1$. Then the leaves of $T$ correspond to 
distinct Hamiltonian cycles in $H_{t+1}=G$. Note that, by construction, the distance in $T$ between the root and any leaf is $t$.

If $T$ has a path of length $\lfloor n^{1/4}\rfloor $ whose internal vertices are of degree 2 in $T$, then (6) holds. So assume no such path exists in $T$. We obtain the tree $T^*$ from $T$ by contracting all edges of $T$ incident with degree 2 vertices in $T$. Then all vertices in $T^*$, except the leaves and possibly the root, have degree at least 3. Since each leaf of $T$ has distance $t\ge n^{1/2}$ from the root $r$, 
the distance between the root and any leaf in $T^*$ is at least $n^{1/4}$. Hence, $T^*$ and, thus, $T$  both have at least $2^{n^{1/4}}$ leaves.  Therefore,  $G$ has at least $2^{n^{1/4}}$ Hamiltonian cycles. This proves (6). 

\medskip 

Without loss of generality, we may assume $k=1$ in (6), i.e., for each $1 \leq j\leq 1+\lfloor n^{1/4}\rfloor$, there exist $a_j,b_j\in V(C_j)$ such that, for $1\leq j \leq \lfloor n^{1/4}\rfloor$,  $G_{j}-(V(C_{j})\setminus \{a_j,b_j\})$ has a unique Hamiltonian path $P_j$ between $a_j$ and $b_j$,  and $P_j$ contains $a_{j+1}z_{j+1}b_{j+1}$. For the sake of simplicity, let $q:=1+\lfloor n^{1/4}\rfloor$.

\medskip

By  (3) and (5), we see that $D_{j+1}$, $C_j$, and $G_j$, for $1\leq j\leq q-1$, satisfy the conditions in Lemma~\ref{diamond-4-cycle} (with $D_{j+1}, C_j, G_j$ as $D', C, G$, respectively,  in Lemma~\ref{diamond-4-cycle}). 
Hence, by Lemma~\ref{diamond-4-cycle}, we know that, for $ 1\leq j\leq  q-1$,  if $\{c_j, d_j\}\subseteq V(C_j)$ and $\{c_j,d_j\}\neq \{a_j,b_j\}$, then  $G_j-(V(C_j)\setminus \{c_j,d_j\})$ has at least  two Hamiltonian paths between $c_j$ and $d_j$ but not containing $a_{j+1}z_{j+1}b_{j+1}$. Recall that for $1\leq j \leq q$, $H_j$ denotes the graph obtained from $G$ by contracting $\overline{C_j}-C_j$ to a new vertex $z_j$.

Let $c_{1},d_{1}\in V(C_{1})$ be distinct  such that $\{c_{1},d_{1}\}\ne \{a_{1},b_{1}\}$. Then by Lemma~\ref{tuttepath}, 
 $G-(\overline{C_{1}}-C_{1})$ has a Hamiltonian path $Q$ between $c_{1}$ and $d_{1}$. In $H_{1}$, $F_{1}= Q\cup c_{1}z_{1} d_{1}$ is a Hamiltonian cycle. 
We now define a rooted tree $T$ whose root $r$ represents  $F_{1}$, and whose leaves are Hamiltonian cycles in $G$. 

 By (6) (where we assume $k=1$), $G_{1}-(V(C_{1})\setminus \{c_{1},d_{1}\})$ has at least two Hamiltonian paths between $c_{1}$ and $d_{1}$ and not containing $a_{2}z_{2}b_{2}$. 
For each such Hamiltonian path $P_1$, we see that $(F_{1}-z_{1})\cup P_1$ is a Hamiltonian cycle in $H_{2}$, and we add a vertex to $T$ representing $(F_1-z_1)\cup P_1$ and make it adjacent to $r$. This defines all vertices of $T$ within distance 1 from $r$.  Note that $d_T(r)\ge 2$. Now suppose we have defined the vertices of $T$ at distance $s$ from $r$ for some $s$ with $1\leq s<q-1$, each representing a Hamiltonian cycle in $H_{s+1}$ not containing $a_{s+1}z_{s+1}b_{s+1}$. 
To define the vertices of $T$ that are at distance $s+1$ from $r$,  let $v$ be an arbitrary vertex of $T$ at  distance $s$ from $r$. 
Then $v$ corresponds to a Hamiltonian cycle $F_{s+1}$ in $H_{s+1}$ not containing $a_{s+1}z_{s+1}b_{s+1}$. Let $c_{s+1},d_{s+1}\in V(C_{s+1})$ be distinct such that $c_{s+1}z_{s+1}d_{s+1}\subseteq F_{s+1}$. Then $\{c_{s+1},d_{s+1}\}\ne \{a_{s+1},b_{s+1}\}$. Hence, by (6), $G_{s+1}-(V(C_{s+1})\setminus \{c_{s+1},d_{s+1}\})$ has at least two Hamiltonian paths between $c_{s+1}$ and $d_{s+1}$ and not containing $a_{s+2}z_{s+2}b_{s+2}$. For each such path $P_{s+1}$,  $(F_{s+1}-z_{s+1})\cup P_{s+1}$ is a Hamiltonian cycle in $H_{s+2}$ not containing $a_{s+2}z_{s+2}b_{s+2}$, and we add a neighbor to $v$ in $T$ to represent $(F_{s+1}-z_{s+1})\cup P_{s+1}$. Thus, $d_T(v)\ge 3$. We repeat this process for $s=1, \ldots, q-2$. 

For an arbitrary vertex $u$ of $T$ that has distance $q-1$ from $r$ in $T$, it represents a Hamiltonian cycle $F_{q}$ in $H_{q}$. Assume $c_{q}z_{q}d_{q}\subseteq F_{q}$. Since  $\delta(G)=5$, we may apply Lemma~\ref{uw-path} or Lemma~\ref{uv-path} to conclude that  $\overline{C_{q}}-(V(C_{q})\setminus\{c_{q}, d_{q}\})$ has at least two Hamiltonian paths between $c_{q}$ and $d_{q}$. For each such path $P_q$, $(F_{q}-z_{q})\cup P_q$ is a Hamiltonian cycle in $G$, and we add a neighbor to $u$ in $T$ and this vertex is a leaf of $T$. Thus, $d_T(u)\geq 3$. The distance between the root and any leaf in $T$ is $q$. Moreover, for any vertex $w\in V(T)$ which is not the root or a leaf, $d_T(w)\geq 3$. So $T$ has at least $2^{q}\geq 2^{n^{1/4}}$ leaves. Hence, $G$ has at least $2^{n^{1/4}}$ Hamiltonian cycles.
\end{proof}

\section*{Acknowledgements}
We thank the anonymous referees for their careful reading of the manuscript and thoughtful suggestions.


\begin{thebibliography}{10}
\addtolength{\baselineskip}{-1ex}


\bibitem {AAT20} A. Alahmadi, R. Aldred, and C. Thomassen, Cycles in 5-connected triangulations, {\it J. Combin. Theory Ser. B} {\bf 140}
  (2020) 27--44.  

\bibitem {BHT99} T. B\"{o}hme, J. Harant, and M. Tk{\' a}{\v c}, On certain Hamiltonian cycles in planar graphs, {\it J. Graph Theory} {\bf 32} (1999) 81--96.

\bibitem {BSV18} G. Brinkmann, J. Souffriau, and N. Van Cleemput, On the number of Hamiltonian cycles in triangulations with few separating triangles, {\it J. Graph theory} {\bf 87} (2018) 164--175.

\bibitem {HST79} S. L. Hakimi, E. F. Schmeichel, and C. Thomassen, On the number of Hamiltonian cycles in a maximal planar graph, {\it J. Graph theory} {\bf 3} (1979) 365--370.

\bibitem {JY02} B. Jackson and X. Yu, Hamilton cycles in plane triangulations, {\it J. Graph Theory} {\bf 41} (2002) 138--150.

\bibitem {Lo20} O. S. Lo, Hamiltonian cycles in 4-connected plane triangulations with few 4-separators, {\it Discrete Mathematics} {\bf 343} (2020) 112126.


\bibitem{TY94} R. Thomas and X. Yu, 4-connected projective-planar
  graphs are Hamiltoninan, {\it J. Combin. Theory Ser. B} {\bf 62}
  (1994) 114--132. 

\bibitem {Th83}  C. Thomassen, A theorem on paths in planar graphs, {\it
     J. Graph Theory} {\bf 7} (1983) 169--176.

\bibitem {Tu56} W. T. Tutte, A theorem on planar graphs, {\it
     Trans. Amer. Math. Soc.} {\bf 82} (1956) 99--116.

\bibitem {Wh31} H. Whitney, A theorem on graphs, {\it  Ann. Math.}
  {\bf 32} (2) (1931) 378--390.


\end{thebibliography}
\end{document}